\newtheorem{thm}{Theorem}[section]
\newtheorem{cor}[thm]{Corollary}
\newtheorem{prop}[thm]{Proposition}
\theoremstyle{definition}
\numberwithin{equation}{section}
\renewcommand{\Re}{\hbox{Re}\,}
\renewcommand{\Im}{\hbox{Im}\,}
\newcommand{\C}{\mathbb{C}}
\newcommand{\N}{\mathbb{N}}
\newcommand{\R}{\mathbb{R}}
\newcommand{\supp}{\operatorname{supp}}
\def\tilde{\widetilde}
\def \bfo {\begin {eqnarray*} }
\def \efo {\end {eqnarray*} }
\def \ba {\begin {eqnarray*} }
\def \ea {\end {eqnarray*} }
\def \beq {\begin {eqnarray}}
\def \eeq {\end {eqnarray}}
\def \supp {\hbox{supp }}
\def \p {\partial}
\def\tilde{\widetilde}
\def \bfo {\begin {eqnarray*} }
\def \efo {\end {eqnarray*} }
\def \ba {\begin {eqnarray*} }
\def \ea {\end {eqnarray*} }
\def \beq {\begin {eqnarray}}
\def \eeq {\end {eqnarray}}
\def \supp {\hbox{supp }}
\def \p {\partial}
\begin{document}

 \title[Bounds on eigenfunctions]{Bounds on eigenfunctions of semiclassical operators with double characteristics}
\author[Krupchyk]{Katya Krupchyk}

\address
        {K. Krupchyk, Department of Mathematics\\
University of California, Irvine\\ 
CA 92697-3875, USA }

\email{katya.krupchyk@uci.edu}

\author[Uhlmann]{Gunther Uhlmann}

\address
       {G. Uhlmann, Department of Mathematics\\
       University of Washington\\
       Seattle, WA  98195-4350\\
       USA}
\email{gunther@math.washington.edu}

\maketitle

\begin{abstract}
We obtain sharp uniform bounds on the low lying eigenfunctions for a class of semiclassical pseudodifferential operators with double characteristics and complex valued symbols, under the assumption that the quadratic approximations along the double characteristics are elliptic.

\end{abstract}

\section{Introduction and statement of results}

This paper is motivated by the study of the semiclassical Schr\"odinger operator 
\[
P=-h^2\Delta+V(x)\quad \text{on}\quad  \R^n,
\]
where $V\in C^\infty(\R^n;\R)$ has non-degenerate potential wells. Such operators are of great significance in quantum mechanics as well as in geometry, see \cite{Simon_book_87} and \cite{Helffer_book_1988}.

When the potential $V$ is such that $V\ge 0$ and $\liminf_{|x|\to \infty}V(x)>0$, taking the Friedrichs extension of $P$,  we obtain a nonnegative self-adjoint operator on $L^2(\R^n)$ with discrete spectrum in an interval of the form $[0,\delta]$ with $\delta>0$ small but fixed, see \cite[p. 37]{Dimassi_Sjostrand}.  The eigenvalues of $P$ in an interval of the form $[0,Ch]$, when $h\to 0$, known as the low lying eigenvalues, are of primary importance in quantum mechanics. 

To recall some precise results concerning low lying eigenvalues and corresponding eigenfunctions of $P$, let  us assume for simplicity that the potential $V$ is bounded with all derivatives and that $V$ has a unique minimum which is non-degenerate and achieved at $x=0$  so that $V^{-1}(0)=\{0\}$, $V'(0)=0$, and $V''(0)>0$.  Taylor expanding the symbol $p(x,\xi)=\xi^2+V(x)$ of $P$ at $(0,0)$, we get
\begin{equation}
\label{eq_introduction_app_quad}
p(x,\xi)=q(x,\xi)+\mathcal{O}(x^3),\quad q(x,\xi)=\xi^2+\frac{1}{2} V''(0)x\cdot x.
\end{equation}
Thanks to the works  \cite{Simon_1983} and \cite{Helffer_Sjostrand_1984}, it is known that 
the low lying eigenvalues $\lambda(h)$ of $P$ enjoy complete asymptotic expansions of the form
\begin{equation}
\label{eq_introduction_asymp_Schr}
 \lambda(h)\sim h(E_0+h^{1/2}E_1+hE_2+\dots), \quad h\to 0,
\end{equation}
where $E_0$ is an eigenvalue of the quadratic operator $q(x,D_x)=-\Delta +\frac{1}{2} V''(0)x\cdot x$, and $E_j\in \R$, $j=0,1,\dots$.   
Turning the attention to the corresponding low lying eigenfunctions of $P$,  from the works \cite{Helffer_Sjostrand_1984},  \cite{Helffer_Sjostrand_1985}, and \cite{Dimassi_Sjostrand},  we know that they can be well approximated by suitable WKB expressions. Specifically, when $\lambda(h)\in [0,Ch]$ is a simple eigenvalue of $P$, the corresponding $L^2$--normalized eigenfunction $u(x;h)$ has the form
\begin{equation}
\label{eq_introduction_uniform_1}
u(x;h)=h^{-\frac{n}{4}}e^{-\frac{\varphi(x)}{h}}(a(x;h)+\mathcal{O}(h^\infty)),
\end{equation}
in a small neighborhood of $x=0$. Here $\varphi\in C^\infty(\text{neigh}(0,\R^n);\R)$ is such that $\varphi(0)=0$, $\varphi'(0)=0$, $\varphi''(0)>0$, and $a(x;h)$ is smooth in $x$ with an asymptotic expansion in powers of $h$.  Away from a small neighborhood of $0$, the eigenfunction $u(x;h)$ is exponentially decaying, see \cite[Chapter 6]{Dimassi_Sjostrand}, and in particular it follows that $u\in L^\infty(\R^n)$ and 
\begin{equation}
\label{eq_introduction_uniform}
\|u\|_{L^\infty}\le \mathcal{O}(1) h^{-\frac{n}{4}}.
\end{equation}

Now in many problems of mathematical physics, ranging from fluid dynamics and theory of superconductivity to kinetic theory, one encounters more general semiclassical operators, including some non-self-adjoint ones, such as Schr\"odinger operators with complex potentials as well as operators of Kramers-Fokker-Planck type, see  \cite{Almog_Henry}, \cite{Davies_non_self-adj}, \cite{Helffer_Nier}. 
 A basic feature of such operators is that similarly to \eqref{eq_introduction_app_quad} they can locally be modeled 
by quadratic differential operators, sometimes satisfying suitable ellipticity conditions.

In this paper we are interested in the study of  low lying eigenfunctions for such more general  semiclassical pseudodifferential operators, including non-self-adjoint ones. Specifically,   we shall be concerned with operators of the form
\begin{equation}
\label{eq_int_new_5}
P=\text{Op}_h^w(p_0)+h\text{Op}_h^w(p_1) \quad \text{on}\quad \R^n,\quad n\ge 2,
\end{equation}
where $\text{Op}_h^w(p)$ is the semiclassical Weyl quantization of a symbol $p=p(x,\xi;h)$, 
\begin{equation}
\label{eq_1_weyl}
(\text{Op}_h^w(p)u)(x)=\frac{1}{(2\pi h)^n}\int_{\R^n}\int_{\R^n} e^{\frac{i}{h}(x-y)\cdot\xi}p\bigg(\frac{x+y}{2},\xi;h\bigg)u(y)dyd\xi.
\end{equation}
Here $0<h\le 1$ is the semiclassical parameter. 

Let us state our assumptions on the symbols $p_0$ and $p_1$ in \eqref{eq_int_new_5}. First we assume that $p_0\in C^\infty(\R^{2n};\C)$, independent of $h$,  is such that  
\begin{equation}
\label{eq_assumption_5}
\p^{\alpha}p_0\in L^\infty(\R^{2n}), \quad \alpha\in \N^{2n}, \quad |\alpha|\ge 2. 
\end{equation}
We assume that 
\begin{equation}
\label{eq_assumption_1}
\Re p_0(X)\ge 0,\quad  X=(x,\xi)\in \R^{2n},
\end{equation}
and we also make the assumption of  ellipticity at infinity for $\Re p_0$ in the sense that for some $C>1$, 
\begin{equation}
\label{eq_assumption_4}
\Re p_0(X)\ge \frac{\langle X\rangle^2}{C}, \quad  |X|\ge C.
\end{equation} 
Here $\langle X\rangle=\sqrt{1+|X|^2}$.  Furthermore, let us assume that 
\begin{equation}
\label{eq_assumption_2_1}
(\Re p_0)^{-1}(0)=\{0\}. 
\end{equation}
Notice that \eqref{eq_assumption_2_1} and \eqref{eq_assumption_1} imply that 
\[
\nabla \Re p_0(0)=0.
\]
Next we assume that 
\[
\Im p_0(0)=\nabla \Im p_0(0)=0,
\]
so that $X=0$ is a doubly characteristic point for the full complex valued symbol $p_0$.  
By Taylor's expansion, we write 
\begin{equation}
\label{eq_assump_quadratic_form}
p_0(X)=q_0(X)+\mathcal{O}(|X|^3), \quad \text{as}\quad |X|\to 0,
\end{equation}
where 
\[
q_0(X)=\frac{1}{2}p_0''(0)X\cdot X,
\]
and  $p''_0$ is the Hessian of $p_0$.  In view of  \eqref{eq_assumption_1}, we know that $\Re q_0(X)\ge 0$, $X\in \R^{2n}$. Our final assumption on $p_0$ is that the quadratic form $\Re q_0$ is positive definite, i.e. 
\begin{equation}
\label{eq_assumption_2_1_1}
\Re q_0(X)>0, \quad 0\ne X\in \R^{2n}. 
\end{equation}

Regarding the symbol $p_1$ in \eqref{eq_int_new_5}, we assume that $p_1(X; h)\in C^\infty(\R^{2n};\C)$ and similarly to \eqref{eq_assumption_5}, we also assume that  
\begin{equation}
\label{eq_assumption_p_1_der}
\p^\alpha p_1\in L^\infty(\R^{2n}) , \quad \alpha\in \N^{2n},\quad |\alpha|\ge 2,
\end{equation}
uniformly in $h\in (0,1]$. 

Let us mention that the study of operators with double characteristics has long played a prominent role in the theory of linear PDE, and we refer to \cite{Boutet_1974}, \cite{Sjostrand_1974}, \cite{Hormander_1975},  \cite{Uhmann_1977} for some of the fundamental results in this area.  
 
\textbf{Example.} As a simple yet significant example of an operator for which all the assumptions above are satisfied, let us consider  a Schr\"odinger operator with a complex potential,   
\[
P=-h^2\Delta+V(x)+iW(x)\quad\text{on}\quad \R^n, \quad n\ge 2.
\]
Here $V,W\in C^\infty(\R^n;\R)$ are such that  $\p^\alpha V, \p^\alpha W\in L^\infty(\R^n)$  for $|\alpha|\ge 2$.  We assume that $V(x)\ge 0$ for  $x\in \R^n$ and $V(x)\ge |x|^2/C$ for $|x|\ge C$. Furthermore, assume that 
$V^{-1}(0)=\{0\}$,  $V''(0)>0$, and $W(0)=\nabla W(0)=0$.  

Coming back to the operator $P$ in \eqref{eq_int_new_5}, we shall view it  as a closed densely defined operator on $L^2(\R^n)$, equipped with the domain 
\[
\mathcal{D}(P)=\{ u\in L^2(\R^n): (-h^2\Delta +|x|^2)u\in L^2(\R^n)\}.
\]
We notice that the inclusion map $\mathcal{D}(P)\hookrightarrow L^2(\R^n)$ is compact, and hence, the spectrum of $P$ is discrete.  The low lying eigenfunctions considered in this work correspond to the eigenvalues of $P$ in 
an open disc $D(0,Ch)$ of radius $Ch$, centered at the origin.  

Thanks to the works  \cite{Sjostrand_1974}, \cite{Boutet_1974},  \cite{Herau_Sjostrand_Stolk} and \cite{Hitrik_Pravda-Starov_3},  we know that the eigenvalues of $P$ in the disc $D(0,Ch)$ enjoy complete asymptotic expansions when the subprincipal  symbol $p_1$ in \eqref{eq_int_new_5} is such that 
\[
p_1(x,\xi;h)\sim\sum_{j=0}^\infty h^{j}p_{1,j}(x,\xi).
\]
Specifically, similarly to \eqref{eq_introduction_asymp_Schr},  for any $C>0$,  there exists $h_0>0$ such that for all $0<h\le h_0$, the eigenvalues $\lambda_k(h)$ of $P$ in $D(0,Ch)$ are given by
\[
\lambda_k(h)\sim h(\mu_k+p_{1,0}(0)+h^{1/N_k}\mu_{k,1}+h^{2/N_k}\mu_{k,2}+\dots),
\]
where $\mu_k$ are the eigenvalues of  $\text{Op}_1^w(q_0)$ in $D(0,C)$, repeated with their algebraic multiplicity $N_k\in \N$. The eigenvalues $\mu_k$ can be computed explicitly, see \cite{Sjostrand_1974}, \cite{Boutet_1974}.  

Turning the attention to the low lying eigenfunctions of $P$ in \eqref{eq_int_new_5}, let us remark that obtaining WKB approximations for the eigenfunctions similar to \eqref{eq_introduction_uniform_1} seems to be out of reach in general. Nevertheless, one can still hope for precise 
bounds of the form \eqref{eq_introduction_uniform} for  the low lying eigenfunctions of $P$. It turns out that this hope is justified, as the following theorem, which is the main result of this work, shows. To state this result, we shall equivalently be concerned with an equation of the form $Pu=0$.    
\begin{thm}
\label{thm_main}
Assume that 
$u\in L^2(\R^n)$, $\|u\|_{L^2}=1$, is such that 
\[
(\emph{\text{Op}}_h^w(p_0)+h\emph{\text{Op}}_h^w(p_1))u=0\quad \text{on}\quad \R^n, \quad n\ge 2.
\]
There exists $h_0>0$ such that for all $h\in (0,h_0]$, we have $u\in L^\infty(\R^n)$
and
\begin{equation}
\label{eq_int_est_main_2}
\|u\|_{L^\infty}\le \mathcal{O}(1) h^{-\frac{n}{4}}.
\end{equation}
Hence, by interpolation, 
\begin{equation}
\label{eq_int_est_main_3}
\|u\|_{L^p}\le \mathcal{O}(1) h^{\frac{n}{2p}-\frac{n}{4}}, \quad  \quad 2\le p\le \infty. 
\end{equation}
\end{thm}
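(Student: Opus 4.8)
The plan is to rescale $u$ parabolically at scale $h^{1/2}$ in phase space, so that the eigenvalue equation becomes a small, uniformly controlled perturbation of the eigenvalue equation for the globally elliptic model operator $\text{Op}_1^w(q)$; then to run a standard global elliptic (Shubin) estimate; and finally to convert the resulting isotropic Sobolev bound into the $L^\infty$ bound \eqref{eq_int_est_main_2}, with \eqref{eq_int_est_main_3} following by interpolation with $\|u\|_{L^2}=1$. The first ingredient is a \emph{macroscopic localization}: assumptions \eqref{eq_assumption_1}, \eqref{eq_assumption_4}, \eqref{eq_assumption_2_1} give $\Re p(X)\ge c(c_0)>0$ on $\{|X|\ge c_0\}$ for each fixed $c_0>0$ (compactness of the annulus $c_0\le|X|\le C$ and ellipticity at infinity), so since $|\lambda|<Ch$ the symbol $p-\lambda$ is elliptic of order $2$ there, and the semiclassical elliptic parametrix gives $\text{Op}_h^w(\chi)u=\mathcal{O}(h^\infty)$ in every semiclassical Sobolev norm for each $\chi\in C^\infty_b(\R^{2n})$ supported in $\{|X|\ge c_0\}$. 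Hence $u$ is confined to $\{|X|\lesssim 1\}$ modulo $\mathcal{O}(h^\infty)$, and $p$ may be modified on $\{|X|\ge R\}$, for a suitable fixed $R$, without affecting the eigenvalue equation modulo $\mathcal{O}(h^\infty)$.

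For the rescaling, let $U_h\colon L^2(\R^n_y)\to L^2(\R^n_x)$, $(U_hw)(x)=h^{-n/4}w(h^{-1/2}x)$, which is unitary, and set $v=U_h^{-1}u$, so $\|v\|_{L^2}=1$. A change of variables in \eqref{eq_1_weyl} gives $U_h^{-1}\text{Op}_h^w(p)U_h=\text{Op}_1^w(\tilde p)$ with $\tilde p(Y)=p(h^{1/2}Y)$; using that $q$ is homogeneous of degree $2$ and $r:=p-q=\mathcal{O}(|X|^3)$ near $0$, the equation becomes $\text{Op}_1^w\big(q(Y)+h^{-1}r(h^{1/2}Y)\big)v=\mu v$ with $\mu=\lambda/h$, $|\mu|<C$. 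Using the macroscopic step (cutting off at $|X|=h^{1/2}|Y|=R$), replace $q+h^{-1}r(h^{1/2}\cdot)$ for $|Y|\gtrsim Rh^{-1/2}$ by an order-$2$ elliptic symbol and call the result $a_h$; two properties must be checked uniformly in $h$. (i) $a_h$ lies in the isotropic class $S(\langle Y\rangle^2)$: on $\{h^{1/2}|Y|\le R\}$ one has, by \eqref{eq_assumption_5} and $r=\mathcal{O}(|X|^3)$, $|\partial^\alpha_Y(h^{-1}r(h^{1/2}Y))|=h^{|\alpha|/2-1}|(\partial^\alpha r)(h^{1/2}Y)|\lesssim h^{|\alpha|/2-1}\min\!\big((h^{1/2}|Y|)^{3-|\alpha|},1\big)\lesssim\langle Y\rangle^{2-|\alpha|}$, which is exactly the defining bound, and the elliptic extension is harmless. (ii) $a_h-\mu$ is globally elliptic of order $2$: by \eqref{eq_assumption_2_1_1} and $r=\mathcal{O}(|X|^3)$ one gets $\Re a_h(Y)\ge c|Y|^2-\mathcal{O}(h^{1/2}|Y|^3)\ge\tfrac{c}{2}|Y|^2$ for $C_1\le|Y|\lesssim h^{-1/2}$, while the elliptic extension covers $\{|Y|\gtrsim h^{-1/2}\}$, so $|a_h(Y)-\mu|\gtrsim\langle Y\rangle^2$ for $|Y|\ge C_1$.

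Given (i) and (ii), $A_h:=\text{Op}_1^w(a_h)$ is a globally elliptic operator of order $2$ in the isotropic (Shubin) calculus, with all estimates uniform in $h\in(0,h_0]$ and in $|\mu|<C$; moreover $(A_h-\mu)v=\mathcal{O}(h^\infty)$ in every isotropic Sobolev space $H^s_{\mathrm{iso}}$, since $(A_h-\mu)v$ is $U_h^{-1}$ applied to $\text{Op}_h^w$ of a symbol supported in $\{|X|\ge R\}$, where $u$ is $\mathcal{O}(h^\infty)$. Constructing a left parametrix $B_h$ of order $-2$ with $B_h(A_h-\mu)=I-S_h$, $S_h$ of order $-\infty$ (all uniform in $h$), and using $\|v\|_{L^2}=1$ together with the qualitative regularity $v\in\bigcap_s H^s_{\mathrm{iso}}=\Symb(\R^n)$ (global elliptic regularity and $u\in\mathcal{D}(P)$), we get $\|v\|_{H^{s+2}_{\mathrm{iso}}}\lesssim\|(A_h-\mu)v\|_{H^s_{\mathrm{iso}}}+\|v\|_{L^2}=\mathcal{O}(1)$ for all $s\ge0$. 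Since $H^s_{\mathrm{iso}}\hookrightarrow H^s(\R^n)\hookrightarrow L^\infty(\R^n)$ for $s>n/2$, this gives $\|v\|_{L^\infty}=\mathcal{O}(1)$, hence $\|u\|_{L^\infty}=h^{-n/4}\|v\|_{L^\infty}=\mathcal{O}(h^{-n/4})$, which is \eqref{eq_int_est_main_2}; and \eqref{eq_int_est_main_3} follows from $\|u\|_{L^p}\le\|u\|_{L^2}^{2/p}\|u\|_{L^\infty}^{1-2/p}$ for $2\le p\le\infty$.

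The delicate point is Step 2, properties (i) and (ii). The rescaled symbol $q+h^{-1}r(h^{1/2}\cdot)$ is \emph{not} globally of isotropic type (its high-order derivatives do not decay at infinity, only \eqref{eq_assumption_5} holds), and the remainder $h^{-1}r(h^{1/2}\cdot)$, although pointwise small on bounded sets, has the same order, $2$, as the model part $q$, so it is a singular rather than a routine lower-order perturbation. Making this work forces the two localizations to be used in tandem: the macroscopic one at $|X|\sim 1$ (ellipticity at infinity of $\Re p$), which removes the non-decaying high derivatives, and the microscopic one at $|X|\sim h^{1/2}$, i.e.\ $|Y|\sim1$ (positive definiteness of $\Re q$ near the double characteristic), so that the cubic vanishing of $r$ at $X=0$ exactly compensates the factor $h^{-1}$ and the rescaling of derivatives on the relevant range $|X|\lesssim1$. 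Once the rescaled symbol is placed, uniformly in $h$, in the isotropic class with uniform ellipticity off a fixed compact set, the remaining steps are routine; alternatively one could avoid the explicit rescaling and argue throughout in a Weyl--H\"ormander calculus attached to an $h$-dependent two-scale metric on $T^*\R^n$, but the rescaled picture is more transparent.
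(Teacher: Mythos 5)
Your overall route is genuinely different from the paper's: you rescale once and for all to unit scale, modify the symbol outside $\{|X|\ge R\}$, and try to place the resulting family $a_h$ uniformly in the isotropic (Shubin) calculus so that a single global elliptic parametrix plus Sobolev embedding finishes the proof. The paper instead proves a sharper microlocalization of $u$ to $\{|X|\lesssim h^\delta\}$, $\delta<1/2$, with remainder in $h^{M(1-2\delta)}S_\delta(\langle X\rangle^{-N})$ for all $M,N$ (Proposition \ref{prop_localization_eigenfunction}), establishes a sharp G{\aa}rding a priori estimate with a cutoff on the scale $\sqrt{h/\tilde h}$ (Proposition \ref{prop_Garding_my}), tests it on $\text{Op}_h^w(q^N(X/\sqrt{\varepsilon}))u$ via explicit commutator estimates (Proposition \ref{prop_test_Gaarding}), and only then invokes the Shubin calculus, for the fixed $h$-independent symbol $q^N$. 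This design deliberately avoids putting any $h$-dependent symbol living on the $h^{1/2}$ scale into a calculus, which is precisely where your write-up is thin. Your symbol checks (i)--(ii) are fine (with $R$ chosen small so that the cubic error is dominated by $\Re q$, or directly from \eqref{eq_est_re_p_0}), and the uniform parametrix and embedding steps are routine once (i)--(ii) and the error estimate are in hand.

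The genuine gap is the one-line claim that $(A_h-\mu)v=\mathcal{O}(h^\infty)$ in \emph{every} isotropic Sobolev space ``since it is $U_h^{-1}$ applied to $\text{Op}_h^w$ of a symbol supported in $\{|X|\ge R\}$, where $u$ is $\mathcal{O}(h^\infty)$''. First, the difference symbol $d_h=a_h-\tilde p$ is supported in $\{|Y|\gtrsim h^{-1/2}\}$ and is \emph{not} uniformly in the Shubin class there (only \eqref{eq_assumption_5} is available, so its derivatives are bounded but do not decay), so you cannot quote Shubin mapping properties for it; you need a calculus of $S(m,dY^2)$ type plus a disjoint-support composition estimate across the two scales, since the relevant cutoffs are, in the original variables, exactly the $h^{1/2}$-scale symbols that admit no expansion in powers of $h$. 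Second, and more importantly, to bound $\text{Op}_1^w(d_h)v$ in $H^{s}_{\mathrm{iso}}$ you must control $u$ microlocally on $\{|X|\ge c_0\}$ in \emph{weighted} norms (powers of $\langle X\rangle$ and derivatives), whereas your macroscopic localization is only stated, and only follows from the parametrix as you set it up, in $L^2$-type norms; the bare statement $\text{Op}_h^w(\chi)u=\mathcal{O}(h^\infty)$ in $L^2$ does not launch the bootstrap $\|v\|_{H^{s+2}_{\mathrm{iso}}}\lesssim\|(A_h-\mu)v\|_{H^{s}_{\mathrm{iso}}}+\|v\|_{L^2}$ even for the single finite $s>n/2$ you need. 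This upgraded localization is provable --- e.g.\ by iterating your parametrix using $(P-\lambda)^k u=0$, or by the paper's argument, whose remainder class $h^{M(1-2\delta)}S_\delta(\langle X\rangle^{-N})$ with arbitrary $M,N$ is exactly the weighted smallness required (and Proposition \ref{prop_test_Gaarding} shows the bookkeeping needed to pair such remainders with symbols growing polynomially in $X$) --- but as written your argument omits it, and it is the actual core difficulty rather than a routine verification.
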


The proof of Theorem \ref{thm_main} gives also the following stronger result. 
\begin{cor}
\label{cor_main}
 Assume that 
$u\in L^2(\R^n)$, $\|u\|_{L^2}=1$, is such that 
\[
(\emph{\text{Op}}_h^w(p_0)+h\emph{\text{Op}}_h^w(p_1))u=0\quad \text{on}\quad \R^n, \quad n\ge 2.
\]
 Then for any $K\in \N$, there exists $h_0>0$ such that for all $h\in (0,h_0]$, we have $u\in C^\infty(\R^n)$ and
\begin{equation}
\label{eq_est_cor}
\bigg\| \bigg(\frac{x}{h^{1/2}}\bigg)^\alpha (h^{1/2}\p_x)^\beta u(x) \bigg\|_{L^\infty(\R^n)}\le \mathcal{O}_K(h^{-n/4}),
\end{equation}
for all $\alpha,\beta\in\N^n$, $|\alpha+\beta|\le K$.
\end{cor}

 The following example shows that the estimates \eqref{eq_int_est_main_2}, \eqref{eq_int_est_main_3} and \eqref{eq_est_cor} are sharp within our class of operators. 

\textbf{Example.} Consider the quantum harmonic oscillator, 
\[
P=-h^2\Delta+|x|^2, \quad x\in \R^n, \quad n\ge 2. 
\] 
The operator $P$, equipped with the domain,
\begin{align*}
\mathcal{D}(P)=\{u\in L^2(\R^n): x^\alpha\p_x^\beta u\in L^2(\R^n), |\alpha+\beta|\le 2\},
\end{align*} 
 is self-adjoint  with  discrete spectrum given by 
 \[
 \lambda_\alpha(h):=(2|\alpha|+n)h,\quad \alpha\in \N^n. 
 \]
The corresponding $L^2$ normalized eigenfunctions are of the form
\[
u_\alpha(h)(x)=h^{-\frac{n}{4}}p_\alpha(x/h^{1/2}) e^{-\frac{|x|^2}{2h}},
\] 
 where $p_\alpha$ are the Hermite polynomials of degree $|\alpha|$, see  \cite[Section 6.1]{Zworski_book}.
A direct computation shows that 
\[
\|u_\alpha(h)\|_{L^p}=C_\alpha h^{\frac{n}{2p}-\frac{n}{4}}, \quad  \quad 2\le p\le \infty,
\]
where 
\[
C_\alpha=\bigg(\int_{\R^n} |p_\alpha(x)|^pe^{-\frac{|x|^2p}{2}}dx\bigg)^{1/p}.
\]
It follows that the bounds \eqref{eq_int_est_main_2} and \eqref{eq_int_est_main_3} are saturated by the ground state eigenfunctions $u_\alpha(h)$, corresponding to $\lambda_\alpha(h)\le \mathcal{O}(h)$. The sharpness of \eqref{eq_est_cor}, for any $K\in \N$, is verified similarly.

\textbf{Remark.} Let us emphasize that the uniform estimate \eqref{eq_int_est_main_2} is valid in the case when the principal symbol $p_0$ is complex valued. In the case when $p_0$ is real-valued, the general results of the works \cite[Theorem 6]{Koch_Tataru_Zworski_2007} and \cite{Smith_Zworski_2013}, valid also for higher energy quasimodes, are available, and specifying these results to the low lying eigenfunctions of $P$, we get the following bound
\[
\|u\|_{L^\infty}\le \mathcal{O}(1)h^{-\frac{(n-1)}{2}},  
\]
which can be compared with  \eqref{eq_int_est_main_2}.  

\textbf{Remark.} Let us mention that Theorem \ref{thm_main} and Corollary \ref{cor_main} can be proved relying on the analysis developed in \cite{Tataru_Fefferman-Phong}. Our approach here is different and is based on direct techniques of semiclassical analysis, see \cite{Dimassi_Sjostrand}, \cite{Zworski_book}.

Let us now describe the main idea of the proof of Theorem \ref{thm_main} and the plan of the paper. Heuristically, we expect solutions $u$ of the equation  
\[
(\text{Op}_h^w(p_0)+h\text{Op}_h^w(p_1))u=0
\]
  to be concentrated to the region $p_0(X)+hp_1(X;h)=0$.  It follows from our assumptions that in this region, 
\[
|X|^2/C\le |p_0(X)|=h|p_1(X;h)|\le Ch(1+|X|^2),
\]
and therefore, for $h$ small enough, we conclude that $u$ should be concentrated to the region
\[
|X|^2\le Ch. 
\]
Hence, one wishes to microlocalize $u$ by means of $h$-pseudodifferential operators of the form 
\begin{equation}
\label{eq_int_scale_h_1_2}
\text{Op}_h^w(\chi(X/h^{1/2})), \quad \chi\in C^\infty_0(\R^n). 
\end{equation}
Since the symbols $\chi(X/h^{1/2})$ are only regular on the scale $h^{1/2}$, we know from \cite[Theorem 4.17]{Zworski_book} that the operators \eqref{eq_int_scale_h_1_2}  belong to a calculus having no asymptotic expansion in powers of $h$. A suitable exotic $h^{1/2}$ calculus, involving two small parameters $0<h\le \tilde h\ll 1$,  was developed in \cite{Sjostrand_Zworski_2007}, see also \cite{Datchev_Dyatlov_2013}. Here we shall not rely on this calculus explicitly but rather borrow some of its ideas and proceed as follows.  First in Proposition \ref{prop_localization_eigenfunction}  we establish a microlocalization of the null solutions $u$ of $P$ to a slightly larger region 
$X=\mathcal{O}(h^{\delta})$, using the standard $h^{\delta}$--calculus with $0<\delta<1/2$.  We refer  to \cite{Sjostrand_1992} for a similar microlocalization in a closely related context. Secondly, using  the sharp  G{\aa}rding inequality,  we get an a priori estimate for $P$, involving a microlocal cutoff,  regular on the scale $(h/\tilde h)^{1/2}$, see Proposition \ref{prop_Garding_my}. Using the a priori estimate and the microlocalization,  we obtain a uniform control in $L^2$ on 
\[
\text{Op}_h^w(q_0^N(X \tilde h^{1/2}/h^{1/2}))u,
\]
where $q_0$ is the quadratic approximation of $p_0$ and  $N$ large, see Proposition \ref{prop_test_Gaarding}.  The proof of Theorem \ref{thm_main} is concluded by a Sobolev embedding argument.

\section{Proof of Theorem \ref{thm_main}}

\subsection{A rough microlocalization of the ground states} To state our microlocalization result we have to introduce some notation.  Let $m\ge 1$ be a $C^\infty$ order function on $\R^{2n}$, i.e. there exist $C_0\ge 1$ and $N_0> 0$ such that 
\[
m(X)\le C_0\langle X-Y\rangle^{N_0} m(Y), \quad X,Y\in \R^{2n}.
\]
For $0\le \delta\le \frac{1}{2}$, we consider the following symbol class, 
\begin{align*}
S_{\delta}(m)=\{ a(X;h)\in C^\infty(\R^{2n};\C): \forall \alpha\in \N^{2n}, \exists C_\alpha>0,\forall h\in(0,1],\\
\forall X\in \R^{2n},|\p^\alpha_X a(X;h)|\le C_\alpha h^{-\delta|\alpha|}m(X)\}.
\end{align*}

We shall need the following composition formula for the Weyl quantization, see \cite{Dimassi_Sjostrand},  \cite{Zworski_book}, and \cite{Bony_Fujiie_Ramond_Zerzeri}.
If $a_1\in S_{\delta_1}(m_1)$ and $a_2\in S_{\delta_2}(m_2)$ with $0\le \delta_1,\delta_2\le 1/2$ and $\delta_1+\delta_2<1$, then 
\begin{equation}
\label{eq_composition_rule_Weyl}
\text{Op}_h^w(a_1)\text{Op}_h^w(a_2)=\text{Op}_h^w(a_1\# a_2),\quad a_1\# a_2\in S_{\max(\delta_1,\delta_2)}(m_1 m_2),
\end{equation}
and 
\[
(a_1\# a_2)(x,\xi)=e^{\frac{ih}{2} \sigma(D_x,D_\xi; D_y,D_\eta)}(a_1(x,\xi) a_2(y,\eta))|_{\substack{y=x\\ \eta=\xi}},
\]
where
\[
\sigma(D_x,D_\xi; D_y,D_\eta)=D_\xi\cdot D_y-D_x\cdot D_\eta.
\]

By Taylor's formula, applied to  $t\mapsto e^{\frac{iht}{2} \sigma(D_x,D_\xi; D_y,D_\eta)}$, for any $N\in \N$, we have 
\begin{equation}
\label{eq_taylor_comp_symb}
\begin{aligned}
(a_1\# a_2)(x,\xi)=\sum_{k=0}^N\frac{1}{k!}  \big(\frac{ih}{2} \sigma(D_x,D_\xi; D_y,D_\eta)\big)^k
(a_1(x,\xi) a_2(y,\eta))|_{\substack{y=x\\ \eta=\xi}}
+\frac{1}{N!}\\
\times \int_0^1 (1-t)^N e^{\frac{iht}{2} \sigma(D_x,D_\xi; D_y,D_\eta)}
\big(\frac{ih}{2} \sigma(D_x,D_\xi; D_y,D_\eta)\big)^{N+1}
(a_1(x,\xi) a_2(y,\eta))|_{\substack{
   y=x \\
   \eta=\xi}}
dt.
\end{aligned}
\end{equation}
It follows that
\begin{equation}
\label{eq_taylor_comp_symb_conclusion}
\begin{aligned}
(a_1\# a_2)(x,\xi)-\sum_{k=0}^N\frac{1}{k!}  \big(\frac{ih}{2} \sigma(D_x,D_\xi; D_y,D_\eta)\big)^k
(a_1(x,\xi) a_2(y,\eta))|_{\substack{y=x\\ \eta=\xi}}\\
\in h^{(N+1)(1-\delta_1-\delta_2)}S_{\max(\delta_1,\delta_2)}(m_1m_2).
\end{aligned}
\end{equation}

We shall also need the following formula from \cite[p. 45]{Martinez_book}, valid for $k=1,2,\dots$,
\begin{equation}
\label{eq_1_15}
\begin{aligned}
\sigma(D_x,D_\xi; D_y,D_\eta)^k&(a_1(x,\xi) a_2 (y,\eta))|_{\substack{y=x\\ \eta=\xi}}\\
&=\sum_{|\alpha|+|\beta|=k} (-1)^{|\alpha|} \frac{k!}{\alpha!\beta!} (\p_\xi^\alpha \p_x^\beta a_1(x,\xi)) (\p_x^\alpha \p_\xi^\beta a_2(x,\xi)).
\end{aligned}
\end{equation}

The main result of this subsection is as follows. 
\begin{prop}
\label{prop_localization_eigenfunction}
Assume that $u\in L^2(\R^n)$, $\|u\|_{L^2}=1$, is such that 
\begin{equation}
\label{eq_prop_new_loc_1}
(\emph{\text{Op}}_h^w(p_0)+h\emph{\text{Op}}_h^w(p_1))u=0 \quad \text{on}\quad \R^n, \quad n\ge 2.
\end{equation}
Then  there is $\psi\in C^\infty_0(\R^{2n},[0,1])$ such that for any $0<\delta<1/2$, there exists $h_0>0$ such that for all $h\in (0,h_0]$, we have
\begin{equation}
\label{eq_loc_eigen_lem}
u=\emph{\text{Op}}_h^w (\psi (X/h^\delta))u +Ru,
\end{equation}
where $R\in h^{M(1-2\delta)}\emph{\text{Op}}_h^w(S_{\delta}(\langle X\rangle^{-N}))$ for any $M, N\in \N$. 
\end{prop}

\begin{proof}
Let $\chi\in C^\infty_0(\R^{2n},[0,1])$ be such that $\chi(X)=1$ for $|X|\le 1$ and $\supp(\chi)\subset \{X\in \R^{2n}: |X|\le 2\}$.  
Since $p_0$ is not elliptic near zero,  to prove \eqref{eq_loc_eigen_lem} we consider the symbol 
\begin{equation}
\label{eq_1_6_0}
\tilde p(X;h)= p_0(X)+hp_1(X;h) +h^{2\delta}\chi(X/h^\delta),
\end{equation}
where $0<\delta<1/2$ is fixed, and construct a parametrix for the operator $\text{Op}_h^w(\tilde p)$. In doing so we shall proceed similarly to the proof of the sharp G{\aa}rding inequality in \cite{Dimassi_Sjostrand}.   

First let us show that there is $C>0$ such that 
\begin{equation}
\label{eq_est_re_p_0}
\Re p_0(X)\ge |X|^2/C,\quad X\in \R^{2n}.
\end{equation}
Indeed, when $|X|\le c_0$ with $c_0>0$ being a small but fixed constant, the estimate \eqref{eq_est_re_p_0}
follows from the quadratic approximation \eqref{eq_assump_quadratic_form} together with \eqref{eq_assumption_2_1_1}.  When $|X|\ge C_0$ with $C_0>0$ being a large but fixed constant, the estimate \eqref{eq_est_re_p_0} follows from \eqref{eq_assumption_4}. Finally, when $c_0 \le |X|\le C_0$, using  \eqref{eq_assumption_1} and the fact that $\Re p_0$ vanishes only at $X=0$, we conclude that $\Re p_0(X)\ge c>0$, and hence, \eqref{eq_est_re_p_0} follows.  

Now as a consequence of \eqref{eq_est_re_p_0}, we have 
\begin{equation}
\label{eq_1_6}
\Re p_0(X) +h^{2\delta}\chi(X/h^\delta)\ge \frac{h^{2\delta}}{C}\langle X\rangle^2, \quad X\in \R^{2n}.
\end{equation}
Indeed, when $|X|/h^\delta\ge 1$,  \eqref{eq_1_6} follows from \eqref{eq_est_re_p_0}, and when $|X|/h^\delta\le 1$, the estimate \eqref{eq_1_6} is a consequence of  \eqref{eq_assumption_1} and the fact that $\chi(X/h^\delta)=1$ in this region. 

Using \eqref{eq_assumption_p_1_der} and Taylor's formula, we see that
\begin{equation}
\label{eq_1_p_1_est}
|p_1(X;h)|\le C\langle X\rangle^2, \quad X\in \R^{2n},
\end{equation}
uniformly in $h\in (0,1]$, and therefore,  since $0<\delta<1/2$, there exists $h_{0}=h_0(\delta)>0$ such that  for  $0<h<h_0$ we have
\begin{equation}
\label{eq_1_7}
\Re \tilde p(X;h)\ge \frac{h^{2\delta}}{C}\langle X\rangle^2, \quad X\in \R^{2n}.
\end{equation}

We shall next estimate $\p^\alpha (1/\tilde p)$. To that end, we use Fa\`a di Bruno's  formula,
\begin{equation}
\label{eq_1_7_Faa_di_Bruno}
\p^{\alpha} f^{-1}=f^{-1} \sum_{k=1}^{|\alpha|}\sum_{\alpha=\beta^1+\dots+\beta^k,|\beta^j|\ge 1}C_{\beta^1,\dots,\beta^k}\prod_{j=1}^k(f^{-1}\p^{\beta^j}f),
\end{equation}
for appropriate constants $C_{\beta^1,\dots,\beta^k}$,
see \cite[p.94]{Zworski_book}. Using \eqref{eq_assumption_5} and \eqref{eq_assumption_p_1_der}, for $|\beta|\ge 2$, we get
\begin{equation}
\label{eq_1_8}
|\p^\beta\tilde p(X;h)|\le C_\beta h^{\delta(2-|\beta|)}, \quad X\in \R^{2n}. 
\end{equation}
This estimate together with \eqref{eq_1_7} implies that for $|\beta|\ge 2$,
\begin{equation}
\label{eq_1_8_2}
\bigg| \frac{\p^\beta \tilde p}{\tilde p}\bigg|\le C_\beta h^{-\delta|\beta|}\langle X\rangle^{-2},\quad X\in \R^{2n}.
\end{equation}

Let $|\beta|=1$. Here we need the following gradient estimate. Let $f:\R^n\to \R$ be $C^2$ with $f''\in L^\infty(\R^n)$, and $f\ge 0$, then 
\begin{equation}
\label{eq_1_9}
|\nabla f(x)|^2\le 2\|f''\|_{L^\infty(\R^n)} f(x),
\end{equation}
see \cite[Lemma 4.31]{Zworski_book}. We have therefore, 
\begin{equation}
\label{eq_1_10}
|\p^\beta (\Re \tilde p)|\le C(\Re \tilde p)^{1/2},  \quad |\beta|=1,
\end{equation}
with $C>0$ independent of $h$.  This together with \eqref{eq_1_7} implies that 
\begin{equation}
\label{eq_1_10_full_est}
\bigg| \frac{\p^\beta  \Re \tilde p}{\tilde p}\bigg|\le C| \tilde p|^{-1/2}\le Ch^{-\delta} \langle X\rangle^{-1},  \quad |\beta|=1,
\end{equation}
for all $0<h<1$ small enough.

Let us now estimate the gradient of $\Im \tilde p$. 
By \eqref{eq_assumption_5}, \eqref{eq_assump_quadratic_form} and \eqref{eq_est_re_p_0},  we get
\begin{equation}
\label{eq_est_im_part_p_1}
|\Im p_0(X)|\le C|X|^2\le C \Re p_0(X).
\end{equation}
We also have 
\begin{equation}
\label{eq_est_im_part_p_2}
 \Re p_0(X)\le C \Re \tilde p(X;h),
\end{equation}
for $h\in (0,1]$. Indeed, using \eqref{eq_1_p_1_est} and \eqref{eq_1_7}, we get
\begin{align*}
\Re p_0(X)&=\Re \tilde p(X;h)-h\Re p_1(X;h)-h^{2\delta}\chi(X/h^\delta)\\
&\le\Re \tilde p(X;h) +h|\Re p_1(X;h)|\le \Re \tilde p(X;h) +Ch\langle X\rangle^2\le C\Re \tilde p(X;h),
\end{align*}
showing \eqref{eq_est_im_part_p_2}. Thus, it follows from \eqref{eq_est_im_part_p_1} and \eqref{eq_est_im_part_p_2} that 
\[
 C \Re \tilde p(X;h) -\Im p_0(X)\ge 0,
\]
 and therefore, using \eqref{eq_1_9} and \eqref{eq_1_10}, we obtain that 
\begin{equation}
\label{eq_1_11}
\begin{aligned}
|\p^\beta \Im  p_0|&\le |\p^\beta(C\Re \tilde p -\Im  p_0)|+C|\p^\beta\Re \tilde p|\\
&\le C (C\Re \tilde p -\Im  p_0)^{1/2}+ C(\Re \tilde p)^{1/2}\le C(\Re\tilde p)^{1/2},\quad   |\beta|=1.
\end{aligned}
\end{equation}

Using that 
\[
\Im \tilde p=\Im p_0+h\Im p_1,
\]
\[
|\p^\beta p_1(X;h)|\le C\langle X\rangle,\quad |\beta|=1,
\]
uniformly in $h\in (0,1]$, and \eqref{eq_1_11}, \eqref{eq_1_7}, we  get
\begin{equation}
\label{eq_1_12}
\bigg| \frac{\p^\beta  \Im \tilde p}{\tilde p}\bigg|\le C| \tilde p|^{-1/2}+ \frac{C h \langle X\rangle}{|\tilde p|} \le Ch^{-\delta} \langle X\rangle^{-1},  \quad |\beta|=1,
\end{equation}
for  all $0<h<1$ small enough.

Combining \eqref{eq_1_8_2}, \eqref{eq_1_10_full_est} and \eqref{eq_1_12}, we write
\begin{equation}
\label{eq_1_13}
\bigg| \frac{\p^\beta  \tilde p}{\tilde p}\bigg|\le Ch^{-|\beta|\delta} \langle X\rangle^{-1}, \quad |\beta|\ge 1, \quad X\in \R^{2n}. 
\end{equation}
Letting $e(X;h)=1/\tilde p$, and using \eqref{eq_1_7_Faa_di_Bruno} together with \eqref{eq_1_7} and \eqref{eq_1_13}, we obtain that 
\begin{equation}
\label{eq_1_14}
|\p^\alpha e|\le C_\alpha h^{-2\delta-\delta|\alpha|}\langle X \rangle^{-2}, \quad |\alpha|\ge 0,
\end{equation}
i.e. $h^{2\delta}e\in S_{\delta}(\langle X\rangle^{-2})$.

Using  \eqref{eq_taylor_comp_symb} with $N=1$ and the fact that the Poisson bracket 
$\{e,\tilde p\}=0$, 
we get
\begin{equation}
\label{eq_1_15_0}
\begin{aligned}
(e&\#\tilde p)(x,\xi)=1\\
&+\frac{1}{4}\int_0^1 (1-t) e^{\frac{iht}{2} \sigma(D_x,D_\xi; D_y,D_\eta)} (ih \sigma(D_x,D_\xi; D_y,D_\eta))^2(e(x,\xi)\tilde p(y,\eta))|_{\substack{y=x\\ \eta=\xi}}dt.
\end{aligned}
\end{equation}
Next we would like to determine the symbol class of the integrand in \eqref{eq_1_15_0} uniformly in $t$.  
To that end, in view of \eqref{eq_1_15}, we first conclude from \eqref{eq_1_14} that  
\begin{equation}
\label{eq_1_16}
\p_\xi^\alpha \p_x^\beta e(x,\xi)\in h^{-4\delta} S_{\delta}(\langle X\rangle^{-2}),\quad |\alpha|+|\beta|=2,
\end{equation}
and from \eqref{eq_1_6_0} and \eqref{eq_assumption_5} that 
\begin{equation}
\label{eq_1_17}
\p_y^\alpha \p_\eta^\beta \tilde p(y,\eta)\in S_{\delta} (1), \quad |\alpha|+|\beta|=2.
\end{equation}
Thus, using \eqref{eq_1_15}, \eqref{eq_1_16} and \eqref{eq_1_17}, we get
\begin{equation}
\label{eq_1_18}
h^2 \sigma(D_x,D_\xi; D_y,D_\eta)^2(e(x,\xi)\tilde p(y,\eta))\in h^{2-4\delta}S_{\delta}(\langle X \rangle^{-2} ).
\end{equation}

Using the fact that 
\[
e^{\frac{iht}{2} \sigma(D_x,D_\xi; D_y,D_\eta)}: S_\delta (\langle X \rangle^{-2})\to S_\delta(\langle X \rangle^{-2}), 
\]
see \cite[Theorem 4.17]{Zworski_book},  and  \eqref{eq_1_18},  we obtain from   \eqref{eq_1_15_0}
 that 
\[
e\#\tilde p=1+h^{2-4\delta} r, \quad r\in S_{\delta}(\langle X \rangle^{-2}).
\]
Hence, 
\begin{equation}
\label{eq_1_19}
\text{Op}_h^w(e)\text{Op}_h^w(\tilde p)=1+h^{2-4\delta}\text{Op}_h^w(r),
\end{equation} 
where the operator $\text{Op}_h^w(r)=\mathcal{O}(1):L^2(\R^n)\to L^2(\R^n)$ is bounded for all $0<h$ small enough, see  \cite[Theorem 4.23]{Zworski_book}.  As $0<\delta<1/2$, we have 
\[
\|h^{2-4\delta}\text{Op}_h^w(r)\|_{L^2(\R^n)\to L^2(\R^n)}<1/2,
\]
for all $0<h$ small enough and therefore, the inverse $(1+h^{2-4\delta}\text{Op}_h^w(r))^{-1}$ exists as an operator $L^2(\R^n)\to L^2(\R^n)$. 

Next using that $1+h^{2-4\delta}r\in S_{\delta}(1)$ and Beals's theorem for $S_\delta(1)$, see \cite[p. 176 -- 177] {Zworski_book},
  we see that $(1+h^{2-4\delta}\text{Op}_h^w(r))^{-1}:=\text{Op}_h^w(r_1)$ is a pseudodifferential operator with $r_1\in S_{\delta}(1)$. 
 
It follows from \eqref{eq_1_19}  that for all $0<h$ small enough, we have
\[
\text{Op}_h^w(r_1)\text{Op}_h^w(e)\text{Op}_h^w(\tilde p)=1.
\]
Using the composition formula \eqref{eq_composition_rule_Weyl},  we see  that 
\[
\text{Op}_h^w(r_1)\text{Op}_h^w(e)h^{2\delta}=\text{Op}_h^w(r_2),  \quad r_2 \in S_\delta(\langle X\rangle^{-2}).
\]
This together with  \eqref{eq_1_6_0}, and the fact that $(\text{Op}_h^w(p_0)+h\text{Op}_h^w(p_1))u=0$ implies that 
\begin{equation}
\label{eq_1_20}
u=\text{Op}_h^w(r_2)\text{Op}_h^w(\chi(X/h^\delta))u.
\end{equation}
Let $\psi\in C_0^\infty(\R^{2n},[0,1])$ be such that $\psi=1$ near $\supp(\chi)$ and 
\[
\supp(\psi)\subset \{X\in \R^{2n}: |X|\le 3\}.
\]
Then it follows from \eqref{eq_1_20} that 
\[
u=\text{Op}_h^w(\psi(X/h^\delta))u +R u,
\]
where 
\[
R=(1- \text{Op}_h^w(\psi(X/h^\delta)))\text{Op}_h^w(r_2)\text{Op}_h^w(\chi(X/h^\delta)).
\]  
Here we notice that 
\begin{align*}
\chi(X/h^{\delta})\in S_\delta(\langle X\rangle^{-N}), \quad \forall N\in \N, \quad\text{and}\quad 1-\psi(X/h^{\delta})\in S_{\delta}(1).
\end{align*}
Since $\supp(1-\psi) \cap \supp(\chi)=\emptyset$,  it follows from \eqref{eq_taylor_comp_symb}  that 
\[
R\in h^{M(1-2\delta)}\text{Op}_h^w(S_{\delta}(\langle X\rangle^{-N})), 
\]
for any $N, M\in \N$. The proof is complete. 
\end{proof}

It follows from Proposition \ref{prop_localization_eigenfunction} that if $u\in L^2(\R^n)$ satisfies  \eqref{eq_prop_new_loc_1} then $u\in \mathcal{S}(\R^n)$, the Schwartz space.

\subsection{Applying G{\aa}rding's inequality} 

We shall need the following version of the sharp G{\aa}rding inequality, see \cite{Tataru_Fefferman-Phong} and \cite{Bony_J_M}. 
\begin{thm}
\label{thm_Gaarding}
Let $a(x,\xi;h)\in C^\infty(\R^{2n})$ be such that $a\ge 0$ on $\R^{2n}$ and $\p^\alpha a\in L^\infty(\R^{2n})$ for all $|\alpha|\ge 2$. Then there exist $C>0$, depending only on $\|\p^\alpha a\|_{L^\infty}$, $|\alpha|\ge 2$, and $h_0>0$ such that 
\[
(\emph{\text{Op}}^w_h(a)u,u)_{L^2(\R^n)}\ge -C h\|u\|^2_{L^2(\R^n)},
\]
for all $0<h\le h_0$ and $u\in \mathcal{S}(\R^n)$. 
\end{thm}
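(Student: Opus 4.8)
The plan is to derive the estimate by comparing $\text{Op}_h^w(a)$ with the anti-Wick (Wick) quantization of $a$, which is manifestly non-negative, and to show that the two differ by an operator of norm $\mathcal{O}(h)$ with the required dependence of the constant. For $Y=(y,\eta)\in\R^{2n}$ let
\[
\varphi_Y^h(x)=(\pi h)^{-n/4}e^{\frac{i}{h}\eta\cdot x}e^{-\frac{|x-y|^2}{2h}},\qquad \|\varphi_Y^h\|_{L^2(\R^n)}=1,
\]
and define, for $u\in\mathcal{S}(\R^n)$,
\[
\text{Op}_h^{\mathrm{AW}}(a)u=\frac{1}{(2\pi h)^n}\int_{\R^{2n}}a(Y)\,(u,\varphi_Y^h)_{L^2(\R^n)}\,\varphi_Y^h\,dY .
\]
Since $a$ has at most quadratic growth and $Y\mapsto (u,\varphi_Y^h)_{L^2}$ is rapidly decreasing for $u\in\mathcal{S}(\R^n)$ (this is essentially an FBI transform), the integral converges and, because $a\ge 0$,
\[
(\text{Op}_h^{\mathrm{AW}}(a)u,u)_{L^2}=\frac{1}{(2\pi h)^n}\int_{\R^{2n}}a(Y)\,|(u,\varphi_Y^h)_{L^2}|^2\,dY\ \ge\ 0,\qquad u\in\mathcal{S}(\R^n).
\]
It therefore suffices to show that $\text{Op}_h^w(a)-\text{Op}_h^{\mathrm{AW}}(a)=\text{Op}_h^w(b)$ for a symbol $b$ with $\|\text{Op}_h^w(b)\|_{L^2(\R^n)\to L^2(\R^n)}\le Ch$, the constant $C$ depending only on $\|\partial^\alpha a\|_{L^\infty(\R^{2n})}$, $|\alpha|\ge 2$. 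The asserted inequality then holds for $u\in\mathcal{S}(\R^n)$, and it extends to all $u\in L^2(\R^n)$ by density, interpreting $(\text{Op}_h^w(a)u,u)_{L^2}$ as the value in $(-\infty,+\infty]$ of the non-negative quadratic form $u\mapsto (\text{Op}_h^w(a)u,u)_{L^2}+Ch\|u\|_{L^2}^2$.

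To identify $b$, I would first note that by symplectic covariance and a direct Gaussian computation the rank-one operator $(\,\cdot\,,\varphi_Y^h)\varphi_Y^h$ has Weyl symbol $2^n e^{-|X-Y|^2/h}$, so that
\[
\text{Op}_h^{\mathrm{AW}}(a)=\text{Op}_h^w(\widetilde a),\qquad \widetilde a=a*G_h,\quad G_h(X)=(\pi h)^{-n}e^{-|X|^2/h},\quad \int_{\R^{2n}}G_h=1 .
\]
Thus $b=a-\widetilde a$, and for every multi-index $\alpha$ we have $\partial^\alpha b=\partial^\alpha a-(\partial^\alpha a)*G_h$. Writing $g=\partial^\alpha a$ and using that $G_h$ is even, hence $\int Z\,G_h(Z)\,dZ=0$,
\[
(g*G_h)(X)-g(X)=\int_{\R^{2n}}\big(g(X-Z)-g(X)+\nabla g(X)\cdot Z\big)\,G_h(Z)\,dZ ,
\]
and by Taylor's formula the integrand is $\le\tfrac12\|g''\|_{L^\infty}|Z|^2$ in absolute value, where $g''$ consists of derivatives of $a$ of order $|\alpha|+2\ge 2$, hence bounded by hypothesis. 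Since $\int|Z|^2 G_h(Z)\,dZ=nh$, it follows that $|\partial^\alpha b|\le C_\alpha h$ on $\R^{2n}$ with $C_\alpha$ depending only on $\|\partial^\beta a\|_{L^\infty}$, $|\beta|=|\alpha|+2$; in particular no bound on $a$ or $\nabla a$ (which need not be bounded) enters. Hence $h^{-1}b\in S_0(1)$ with seminorms controlled by $\|\partial^\alpha a\|_{L^\infty}$, $|\alpha|\ge 2$, and the Calder\'on--Vaillancourt theorem (\cite[Theorem 4.23]{Zworski_book}) gives $\|\text{Op}_h^w(b)\|_{L^2\to L^2}=h\,\|\text{Op}_h^w(h^{-1}b)\|_{L^2\to L^2}\le Ch$ with $C$ of the required form. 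Combining with the previous step,
\[
(\text{Op}_h^w(a)u,u)_{L^2}=(\text{Op}_h^{\mathrm{AW}}(a)u,u)_{L^2}+(\text{Op}_h^w(b)u,u)_{L^2}\ge -Ch\|u\|_{L^2}^2 .
\]

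The main obstacle is the last symbol estimate, and specifically obtaining the gain $\mathcal{O}(h)$ rather than the weaker $\mathcal{O}(h^{1/2})$ that a crude bound on $a-a*G_h$ would give: this forces one to exploit the cancellation of the first-order Taylor term against the evenness of $G_h$, and --- equally important for the precise statement --- to verify that each seminorm of $h^{-1}b$ is controlled using only derivatives of $a$ of order $\ge 2$, so that $C$ does not see the (possibly unbounded) lower-order behaviour of $a$. The remaining ingredients --- the Gaussian Weyl symbol of a coherent-state projector, positivity of $\text{Op}_h^{\mathrm{AW}}(a)$, Calder\'on--Vaillancourt boundedness, and the density argument --- are routine. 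An essentially equivalent alternative is H\"ormander's Friedrichs symmetrization, which yields a decomposition $\text{Op}_h^w(a)=A^{+}+\text{Op}_h^w(b)$ with $A^{+}\ge 0$ and $b\in hS_0(1)$ by the same mechanism.
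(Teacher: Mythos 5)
Your proof is correct, but it follows a genuinely different route from the paper: the paper does not prove Theorem \ref{thm_Gaarding} at all, it simply quotes it from the Fefferman--Phong literature (Tataru \cite{Tataru_Fefferman-Phong} and Bony \cite{Bony_J_M}), whereas you give a self-contained argument by Friedrichs/anti-Wick symmetrization. Your computation is sound: the coherent-state projector does have Weyl symbol $2^n e^{-|X-Y|^2/h}$, so the anti-Wick operator is the Weyl quantization of $a*G_h$, its positivity is immediate from $a\ge 0$, and the Taylor-with-vanishing-first-moment argument correctly shows $\partial^\alpha(a-a*G_h)=\mathcal{O}(h)$ with constants involving only derivatives of $a$ of order $|\alpha|+2\ge 2$ (note $\int_{\R^{2n}}|Z|^2G_h(Z)\,dZ=nh$, as you say), so that $h^{-1}(a-a*G_h)\in S_0(1)$ uniformly and Calder\'on--Vaillancourt finishes the estimate on $\mathcal{S}(\R^n)$. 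What your approach buys is exactly the statement needed here --- gain $\mathcal{O}(h)$ with a constant seeing only $\|\partial^\alpha a\|_{L^\infty}$, $|\alpha|\ge 2$ --- by elementary means; what the paper's citations buy is the much stronger Fefferman--Phong bound ($\mathcal{O}(h^2)$ loss, constants depending on derivatives of order $\ge 4$), which is more than the paper uses. One small caveat, which is really a looseness in the statement itself rather than in your argument: since $a$ may grow quadratically, $(\text{Op}_h^w(a)u,u)_{L^2}$ is not literally defined for every $u\in L^2(\R^n)$, and your extension ``by density'' should be phrased via the Friedrichs/form extension of the semibounded symmetric operator $\text{Op}_h^w(a)+Ch$ (the form is closable precisely because $a$ is real and the operator is semibounded on $\mathcal{S}$); with that reading, which is also how the paper uses the inequality in Proposition \ref{prop_Garding_my}, your proof is complete.
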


We shall now establish a suitable a priori estimate for the operator $P=\text{Op}_h^w(p_0)+h\text{Op}_h^w(p_1)$. To that end,  we let $0<\tilde h$ be sufficiently small but independent of $h$. We shall view $\tilde h$ as a second semiclassical parameter. In order to relate the $h$--Weyl quantization and $\tilde h$--Weyl quantization,  following \cite{Sjostrand_Zworski_2007}, we set 
\[
x=\sqrt{\varepsilon}\tilde x,\quad \xi=\sqrt{\varepsilon}\tilde \xi, \quad y=\sqrt{\varepsilon}\tilde y, \quad \varepsilon=h/\tilde h.  
\]
We obtain that
\[
(\text{Op}_h^w(a) u)(x)=\varepsilon^{-\frac{n}{4}} (\text{Op}_{\tilde h}^w(\tilde a) \tilde u)(\tilde x),
\]
where 
\begin{equation}
\label{eq_1_2_0_change}
\begin{aligned}
\tilde a (\tilde x, \tilde \xi)=a (\sqrt{\varepsilon} \tilde x,\sqrt{\varepsilon} \tilde \xi), \quad \tilde u(\tilde x)=\varepsilon^{\frac{n}{4}} u(\sqrt{\varepsilon} \tilde x). 
\end{aligned}
\end{equation}
Letting  
\begin{equation}
\label{eq_1_2_0_U}
U: u(x)\mapsto \tilde u(\tilde x)=\varepsilon^{\frac{n}{4}}u(\sqrt{\varepsilon}\tilde x),
\end{equation}
one can easily see that $U$ is unitary on $L^2(\R^n)$, and we have 
\begin{equation}
\label{eq_1_2}
\text{Op}_h^w(a)=U^{-1}  \text{Op}_{\tilde h}^w(\tilde a) U.
\end{equation}

We have the following consequence of Theorem \ref{thm_Gaarding}. 
\begin{prop} 
\label{prop_Garding_my}
 Let $\chi\in C^\infty_0(\R^{2n},[0,1])$ be such that $\chi(X)=1$ for $|X|\le 1$ and $\supp(\chi)\subset \{X\in \R^{2n}: |X|\le 2\}$. Then there exist  $\tilde C>0$ and $\tilde h_0>0$ such that 
\begin{equation}
\label{eq_prop_Garding_my}
\emph{\Re} ((\emph{\text{Op}}_h^w(p_0)+h\emph{\text{Op}}_h^w(p_1))u, u)_{L^2(\R^n)}+\varepsilon (\emph{\text{Op}}_h^w (\chi (X/\sqrt{\varepsilon} ))u, u)_{L^2(\R^n)}\ge \frac{\varepsilon}{\tilde C}\|u\|^2_{L^2(\R^n)}, 
\end{equation}
for  all $0<h\le \tilde h\le \tilde h_0$ and $u\in \mathcal{S}(\R^n)$.  Here $\varepsilon =h/\tilde h$.
\end{prop}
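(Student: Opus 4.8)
The plan is to deduce \eqref{eq_prop_Garding_my} from the sharp G{\aa}rding inequality (Theorem \ref{thm_Gaarding}), applied in the $\tilde h$--quantization after the rescaling \eqref{eq_1_2_0_U}--\eqref{eq_1_2}. First, since $P=\text{Op}_h^w(p)$ and the Weyl quantization satisfies $\text{Op}_h^w(p)^*=\text{Op}_h^w(\bar p)$, one has $\Re(Pu,u)_{L^2}=(\text{Op}_h^w(\Re p)u,u)_{L^2}$, and since $|\lambda|<Ch=C\varepsilon\tilde h$, the left-hand side of \eqref{eq_prop_Garding_my} is bounded below by
\[
\big(\text{Op}_h^w(\Re p+\varepsilon\chi(X/\sqrt{\varepsilon}))u,u\big)_{L^2}-C\varepsilon\tilde h\|u\|^2_{L^2}.
\]
(All quadratic forms here are understood for $u$ in the natural form domain, equivalently $u\in\mathcal{D}(P)$; only the quadratic growth of $\Re p$ enters, and it suffices to argue for $u\in\mathcal{S}(\R^n)$ and pass to the limit.) So it is enough to bound the first term from below by $\tfrac{\varepsilon}{C'}\|u\|^2_{L^2}$ up to an error $\mathcal{O}(\varepsilon\tilde h)\|u\|^2_{L^2}$, and then take $\tilde h_0$ small.

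Next I would rescale. Applying \eqref{eq_1_2} with $a=\Re p+\varepsilon\chi(\cdot/\sqrt{\varepsilon})$, so that $\tilde a(\tilde X)=\Re p(\sqrt{\varepsilon}\tilde X)+\varepsilon\chi(\tilde X)$, and using that $U$ is unitary, the term above equals $\big(\text{Op}_{\tilde h}^w(\Re p(\sqrt{\varepsilon}\,\cdot)+\varepsilon\chi)v,v\big)_{L^2}$ with $v=Uu$ and $\|v\|_{L^2}=\|u\|_{L^2}$. The purpose of the rescaling is admissibility for G{\aa}rding: the symbol $\varepsilon\chi(X/\sqrt{\varepsilon})$ has $k$-th order derivatives of size $\varepsilon^{1-k/2}$, which blow up as $\varepsilon\to 0$ once $k\ge 3$, so it does not lie in a fixed symbol class in the $h$--calculus; after rescaling, however, it is merely $\varepsilon$ times the fixed function $\chi$, while the rescaled symbol obeys $|\p^\alpha_{\tilde X}(\Re p(\sqrt{\varepsilon}\tilde X))|\le \varepsilon^{|\alpha|/2}\|\p^\alpha\Re p\|_{L^\infty}\le\|\p^\alpha\Re p\|_{L^\infty}$ for $|\alpha|\ge 2$ and $0<\varepsilon\le 1$, by assumption \eqref{eq_assumption_5}.

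Then I would set up the positivity and invoke G{\aa}rding. Recall the pointwise bound $\Re p(X)\gtrsim|X|^2$ established in the proof of Proposition \ref{prop_localization_eigenfunction}, see \eqref{eq_est_re_p_0}. Splitting into the regions $|\tilde X|\le 1$, where $\chi(\tilde X)=1$, and $|\tilde X|\ge 1$, this yields $\Re p(\sqrt{\varepsilon}\tilde X)+\varepsilon\chi(\tilde X)\ge \varepsilon/C'$ for some $C'>0$ and all $\tilde X\in\R^{2n}$, $0<\varepsilon\le 1$. Hence
\[
b(\tilde X):=\frac{1}{\varepsilon}\Re p(\sqrt{\varepsilon}\tilde X)+\chi(\tilde X)-\frac{1}{C'}\ge 0,
\]
and by the derivative estimates above $\p^\alpha b\in L^\infty(\R^{2n})$ for $|\alpha|\ge 2$ with bounds uniform in $0<\varepsilon\le 1$. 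Theorem \ref{thm_Gaarding}, applied to $b$ with semiclassical parameter $\tilde h$, then produces $C''>0$ independent of $h$ and $\tilde h$ such that $(\text{Op}_{\tilde h}^w(b)v,v)_{L^2}\ge -C''\tilde h\|v\|^2_{L^2}$ once $\tilde h\le\tilde h_0$ (shrinking $\tilde h_0$ if needed); multiplying by $\varepsilon$, using linearity of the quantization in the symbol, and undoing \eqref{eq_1_2} gives
\[
\big(\text{Op}_h^w(\Re p+\varepsilon\chi(X/\sqrt{\varepsilon}))u,u\big)_{L^2}\ge\Big(\frac{\varepsilon}{C'}-C''\varepsilon\tilde h\Big)\|u\|^2_{L^2}.
\]
Combining with the reduction of the first paragraph and choosing $\tilde h_0>0$ so small that $(C+C'')\tilde h_0\le \tfrac{1}{2C'}$ yields \eqref{eq_prop_Garding_my} with $\tilde C=2C'$.

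The step I expect to be the main obstacle is the bookkeeping of the powers of $\varepsilon=h/\tilde h$ in the rescaled symbol: one must verify that, after dividing by $\varepsilon$, the symbol $b$ lies in a symbol class with constants uniform in both $h$ and $\tilde h$, so that the G{\aa}rding error is $\mathcal{O}(\tilde h)$ --- a fixed small number --- rather than $\mathcal{O}(1)$ or $\mathcal{O}(h/\tilde h)$. This uniformity is precisely what lets the gain of size $\varepsilon/C'$ dominate both the G{\aa}rding error and the $\mathcal{O}(h)$ contribution of the eigenvalue $\lambda$; the remaining ingredients (the density argument for $u$, the splitting into the two regions $|\tilde X|\lessgtr 1$) are routine.
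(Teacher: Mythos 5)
Your proposal is correct and follows essentially the same route as the paper: rescale via $U$ to the $\tilde h$--quantization, verify the pointwise lower bound $\tfrac{1}{\varepsilon}\Re p(\sqrt{\varepsilon}X)+\chi(X)\ge 1/C$ by splitting $|X|\le 1$ and $|X|\ge 1$ with \eqref{eq_est_re_p_0}, check the uniform $|\alpha|\ge 2$ derivative bounds from \eqref{eq_assumption_5}, apply the sharp G{\aa}rding inequality in the $\tilde h$--calculus, and undo the scaling. The only difference is bookkeeping: you handle the $\lambda$ term separately at the $h$-level, while the paper absorbs $-\lambda/\varepsilon$ (of size $\mathcal{O}(\tilde h)$) directly into the rescaled symbol $\tilde p$.
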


\begin{proof}
To establish \eqref{eq_prop_Garding_my},  using   \eqref{eq_1_2}, we  pass to the $\tilde h$--Weyl quantization and  get
\begin{equation}
\label{eq_1_4_0}
\text{Op}_h^w(p_0)+h\text{Op}_h^w(p_1) +\varepsilon\text{Op}_{ h}^w (\chi (X/\sqrt{\varepsilon} ))=\varepsilon U^{-1}\text{Op}_{\tilde h}^w(\tilde p) U,
\end{equation}
where 
\begin{equation}
\label{eq_1_4}
\tilde p(X;\varepsilon,\tilde h)= \frac{1}{\varepsilon} p_0(\sqrt{\varepsilon} X) +\tilde h p_1(\sqrt{\varepsilon} X; \varepsilon \tilde h) +\chi(X).
\end{equation}

Let us show that there is $C>0$ such that for $0<\tilde h$ small enough, 
\begin{equation}
\label{eq_1_3}
\Re \tilde p(X;\varepsilon,\tilde h)\ge 1/C, \quad X\in \R^{2n},
\end{equation}
uniformly in $\varepsilon$. Indeed, when $|X|\le 1$, the estimate \eqref{eq_1_3} follows from  \eqref{eq_assumption_1}, \eqref{eq_1_p_1_est}, and the fact that $\chi(X)=1$ here. When $|X|\ge 1$,    \eqref{eq_1_3} is implied by \eqref{eq_est_re_p_0} and \eqref{eq_1_p_1_est}. 

Using \eqref{eq_assumption_5} and \eqref{eq_assumption_p_1_der}, for $|\alpha|\ge 2$, we get
\begin{align*}
|\p^\alpha \tilde p(X;\varepsilon,\tilde h)|&\le \frac{(\sqrt{\varepsilon})^{|\alpha|}}{\varepsilon}|(\p^{\alpha}p_0)(\sqrt{\varepsilon}X)| +\tilde h(\sqrt{\varepsilon})^{|\alpha|}|(\p^\alpha p_1)(\sqrt{\varepsilon} X; \varepsilon \tilde h)|  +|\p^\alpha \chi(X)|\\
&\le C_\alpha, \quad X\in \R^{2n},
\end{align*}
uniformly in $\varepsilon\le 1$ and $\tilde h\in (0,1]$.  Applying Theorem \ref{thm_Gaarding} to $\Re \tilde p$ in the $\tilde h$--Weyl quantization, we obtain that there exist  $\tilde C>0$ and  $\tilde h_0>0$ such that  
\begin{equation}
\label{eq_1_5}
\Re (\text{Op}_{\tilde h}^w(\tilde p) u,u)_{L^2(\R^n)}\ge \frac{1}{\tilde C}\|u\|^{2}_{L^2(\R^n)},
\end{equation}
for all $0<h\le \tilde h\le \tilde h_0$  and $u\in \mathcal{S}(\R^n)$.

Using \eqref{eq_1_4_0}, \eqref{eq_1_5} and the fact that $U$ is unitary on $L^2(\R^n)$, we obtain that  
\begin{align*}
\Re ((\text{Op}_h^w(p_0)+h\text{Op}_h^w(p_1))u, u)_{L^2(\R^n)}+\varepsilon (\text{Op}_h^w (\chi (X/\sqrt{\varepsilon}))u, u)_{L^2(\R^n)}\\
=\varepsilon \Re (\text{Op}_{\tilde h}^w(\tilde p) Uu,Uu)_{L^2(\R^n)}\ge \frac{\varepsilon}{\tilde C}\|u\|^2_{L^2(\R^n)}, 
\end{align*} 
for all $0<h\le \tilde h\le h_0$  and $u\in \mathcal{S}(\R^n)$. This completes the proof.
\end{proof}

\subsection{Testing the a priori estimate}
In what follows we shall take $\tilde h>0$ sufficiently small but fixed, i.e. independent of $h$, so that Proposition \ref{prop_Garding_my}  is valid. The dependence on the parameter $\tilde h$ will
therefore not be indicated explicitly. 

The following result obtained by combining Proposition \ref{prop_localization_eigenfunction} and Proposition  \ref{prop_Garding_my}  is an essential step in the proof of Theorem \ref{thm_main}. 
\begin{prop}
\label{prop_test_Gaarding}
Assume that 
\[
(\emph{\text{Op}}_h^w(p_0)+h\emph{\text{Op}}_h^w(p_1))u=0\quad \text{on}\quad \R^n,\quad n\ge 2,
\]
$u\in L^2(\R^n)$, $\|u\|_{L^2}=1$. Set $q_0(X)=\frac{1}{2} p''_0(0)X\cdot X$. Then for every $N\in\N$, there exists $h_0>0$ such that for all $0<h\le h_0$, we have 
\begin{equation}
\label{eq_sub_3_1_0}
\| \emph{\text{Op}}_h^w (q_0^N (X/\sqrt{\varepsilon}))u\|_{L^2(\R^n)}\le \mathcal{O}_N(1), \quad \varepsilon=h/\tilde h.
\end{equation}
\end{prop}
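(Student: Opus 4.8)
The plan is to test the a priori estimate of Proposition~\ref{prop_Garding_my} on the function $w:=\text{Op}_h^w(q^N(X/\sqrt\varepsilon))u$ rather than on $u$ itself; the point is that conjugating $\text{Op}_h^w(q^N(X/\sqrt\varepsilon))$ by $P-\lambda$ produces only an $\mathcal{O}(\varepsilon)$ error, because $q$ is the quadratic part of $p$ at the doubly characteristic point. Here $\tilde h$ is the fixed parameter of Proposition~\ref{prop_Garding_my} and $\varepsilon=h/\tilde h$; note $u\in\mathcal{S}(\R^n)$ by Proposition~\ref{prop_localization_eigenfunction}, so $w\in\mathcal{S}(\R^n)$. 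Since $q(X/\sqrt\varepsilon)=\varepsilon^{-1}q(X)$ is a quadratic polynomial, $\sigma^k$ annihilates it for $k\ge 3$, so the commutator of $\text{Op}_h^w(q(X/\sqrt\varepsilon))$ with any Weyl operator reduces to $ih\,\text{Op}_h^w\big(\{q(X/\sqrt\varepsilon),\,\cdot\,\}\big)$; and $\{q(X/\sqrt\varepsilon),q^N(X/\sqrt\varepsilon)\}=\varepsilon^{-N-1}Nq^{N-1}\{q,q\}=0$, so $[\text{Op}_h^w(q(X/\sqrt\varepsilon)),\text{Op}_h^w(q^N(X/\sqrt\varepsilon))]=0$. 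Writing $P=\varepsilon\,\text{Op}_h^w(q(X/\sqrt\varepsilon))+\text{Op}_h^w(p-q)$ and using $(P-\lambda)u=0$,
\[
(P-\lambda)w=[P,\text{Op}_h^w(q^N(X/\sqrt\varepsilon))]u=[\text{Op}_h^w(p-q),\text{Op}_h^w(q^N(X/\sqrt\varepsilon))]u .
\]

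The main step is to show that this commutator, applied to $u$, is $\mathcal{O}(\varepsilon)$ in $L^2$. Fix $\delta\in\big(\tfrac{N}{2N+1},\tfrac12\big)$, a non-empty interval since $\tfrac{N}{2N+1}<\tfrac12$. By \eqref{eq_assump_quadratic_form} and \eqref{eq_assumption_5} the symbol $p-q$ vanishes to third order at $0$, so $|\p^\gamma(p-q)(X)|\le C_\gamma h^{\delta\max(0,3-|\gamma|)}$ on $\{|X|\le 2h^\delta\}$, and there $\{p-q,q^N\}=Nq^{N-1}\{p-q,q\}=\mathcal{O}(|X|^{2N+1})$. Insert the microlocalization $u=\text{Op}_h^w(\psi(X/h^\delta))u+Ru$ of Proposition~\ref{prop_localization_eigenfunction}, whose $Ru$-part contributes $\mathcal{O}(h^M)$ for any $M$. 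Using the composition formulas \eqref{eq_composition_rule_Weyl}--\eqref{eq_1_15} and the size estimates above on $\supp\psi(X/h^\delta)$, one checks that the Weyl symbol of $[\text{Op}_h^w(p-q),\text{Op}_h^w(q^N(X/\sqrt\varepsilon))]\,\text{Op}_h^w(\psi(X/h^\delta))$ lies in $h^{1-N+(2N+1)\delta}S_\delta(\langle X\rangle^{-\infty})$, the controlling contribution being $ih\{p-q,q^N(X/\sqrt\varepsilon)\}=ih\varepsilon^{-N}Nq^{N-1}\{p-q,q\}$, of size $\mathcal{O}(h\cdot\varepsilon^{-N}h^{(2N+1)\delta})=\mathcal{O}(h^{1-N+(2N+1)\delta})$ on $\supp\psi(X/h^\delta)$, while the remaining odd-order terms of the commutator carry additional factors $h^{2(1-2\delta)}$ and are of lower order. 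By $L^2$-boundedness of $\text{Op}_h^w$ on $S_\delta(1)$, see \cite[Theorem~4.23]{Zworski_book},
\[
\|[\text{Op}_h^w(p-q),\text{Op}_h^w(q^N(X/\sqrt\varepsilon))]u\|_{L^2(\R^n)}=\mathcal{O}(h^{1-N+(2N+1)\delta})=\mathcal{O}(\varepsilon),
\]
since $1-N+(2N+1)\delta>1$. In particular $|\Re((P-\lambda)w,w)_{L^2(\R^n)}|\le\mathcal{O}(\varepsilon)\|w\|_{L^2(\R^n)}$.

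It remains to control $\varepsilon(\text{Op}_h^w(\chi(X/\sqrt\varepsilon))w,w)_{L^2(\R^n)}$. Although $\chi(X/\sqrt\varepsilon)$ and $q^N(X/\sqrt\varepsilon)$ are only regular on the scale $\sqrt\varepsilon$, the change of variables \eqref{eq_1_2_0_U}--\eqref{eq_1_2} turns them into the $\tilde h$-quantizations of the \emph{fixed} symbols $\chi$ and $q^N$, so that
\[
\text{Op}_h^w(\chi(X/\sqrt\varepsilon))\,\text{Op}_h^w(q^N(X/\sqrt\varepsilon))=U^{-1}\,\text{Op}_{\tilde h}^w(\chi)\,\text{Op}_{\tilde h}^w(q^N)\,U=U^{-1}\,\text{Op}_{\tilde h}^w(\chi\# q^N)\,U ,
\]
where $\#$ is the $\tilde h$-Weyl composition. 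Since $\chi\in C^\infty_0(\R^{2n})$ and $q^N$ is a polynomial, the expansion of $\chi\# q^N$ terminates and $\chi\# q^N\in C^\infty_0(\R^{2n})$; as $\tilde h$ is fixed and $U$ is unitary on $L^2(\R^n)$, the right-hand side is bounded on $L^2(\R^n)$ with norm independent of $h$. With $\|u\|_{L^2}=1$ this gives $|(\text{Op}_h^w(\chi(X/\sqrt\varepsilon))w,w)_{L^2(\R^n)}|\le\mathcal{O}(1)\|w\|_{L^2(\R^n)}$. Applying Proposition~\ref{prop_Garding_my} with $u$ replaced by $w$,
\[
\frac{\varepsilon}{\tilde C}\|w\|_{L^2(\R^n)}^2\le\Re((P-\lambda)w,w)_{L^2(\R^n)}+\varepsilon(\text{Op}_h^w(\chi(X/\sqrt\varepsilon))w,w)_{L^2(\R^n)}\le\mathcal{O}(\varepsilon)\|w\|_{L^2(\R^n)},
\]
and hence $\|\text{Op}_h^w(q^N(X/\sqrt\varepsilon))u\|_{L^2(\R^n)}=\|w\|_{L^2(\R^n)}\le\mathcal{O}_N(1)$; the value of $\delta$, hence $h_0$, depends on $N$.

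I expect the delicate point to be the symbol bookkeeping for $[\text{Op}_h^w(p-q),\text{Op}_h^w(q^N(X/\sqrt\varepsilon))]$ in the exotic $h^\delta$-calculus: one must balance the growth $\langle X\rangle^{2N}$ of $q^N$, the factor $\varepsilon^{-N}$ from the rescaling, and the third-order vanishing of $p-q$ at the origin, and verify that on the microsupport $\{|X|\lesssim h^\delta\}$ of $u$ they combine to the gain $h^{1-N+(2N+1)\delta}$, which is $\mathcal{O}(\varepsilon)$ precisely because $\delta$ is taken close enough to $1/2$. The exact vanishing of the commutator with the quadratic part of $P$ and the reduction of the cut-off term to a fixed-$\tilde h$ composition of a compactly supported symbol with a polynomial are comparatively soft.
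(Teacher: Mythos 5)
Your proof is correct and follows essentially the same route as the paper: test the sharp G{\aa}rding a priori estimate of Proposition \ref{prop_Garding_my} on $w=\text{Op}_h^w(q^N(X/\sqrt{\varepsilon}))u$, use that the commutator with the quadratic part of $P$ vanishes exactly, reduce the cutoff term to a fixed $\tilde h$-quantized composition via the unitary rescaling $U$, and control the remaining commutator with $\text{Op}_h^w(p-q)$ on the microsupport $|X|\lesssim h^\delta$ furnished by Proposition \ref{prop_localization_eigenfunction}, with $\delta<1/2$ chosen close to $1/2$ depending on $N$. The only (harmless) deviation is that you estimate $\|[\text{Op}_h^w(p-q),\text{Op}_h^w(q^N(X/\sqrt{\varepsilon}))]u\|_{L^2}$ directly and keep a factor $\|w\|$ by Cauchy--Schwarz, which needs only $\delta>\frac{N}{2N+1}$, whereas the paper pairs back against $u$ through $\text{Op}_h^w(\overline{q}^N(X/\sqrt{\varepsilon}))$ and bounds the resulting triple composition by $\mathcal{O}_N(h)\|u\|^2$, requiring the slightly stronger choice $\delta\ge\frac{2N}{4N+1}$.
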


\begin{proof} 
First using Proposition \ref{prop_localization_eigenfunction}, we see that $\text{Op}_h^w (q_0^N (X/\sqrt{\varepsilon}))u\in L^2(\R^n)$ for any $N\in \N$.  Thus, it follows from the a priori estimate \eqref{eq_prop_Garding_my} that there is $\tilde C>0$  such that 
\begin{equation}
\label{eq_sub_3_1}
\begin{aligned}
 \Re ((\text{Op}_h^w(p_0)&+h\text{Op}_h^w(p_1))\text{Op}_h^w (q_0^N (X/\sqrt{\varepsilon}))u, \text{Op}_h^w (q_0^N (X/\sqrt{\varepsilon}))u)_{L^2(\R^n)}\\
&+\varepsilon (\text{Op}_h^w (\chi (X/\sqrt{\varepsilon} ))\text{Op}_h^w (q_0^N (X/\sqrt{\varepsilon}))u, \text{Op}_h^w (q_0^N (X/\sqrt{\varepsilon}))u)_{L^2(\R^n)}\\
&\ge \frac{\varepsilon}{\tilde C}\|\text{Op}_h^w (q_0^N (X/\sqrt{\varepsilon}))u\|^2_{L^2(\R^n)}, 
\end{aligned}
\end{equation}
for all $0<h$ small enough and all $N\in \N$.   

Let us start by estimating the second term in the left hand side of \eqref{eq_sub_3_1}.  Using \eqref{eq_1_2_0_change}, \eqref{eq_1_2},  and the fact that $U$ is unitary, we have 
\begin{equation}
\label{eq_sub_3_2_-1}
\begin{aligned}
(\text{Op}_h^w & (\chi (X/\sqrt{\varepsilon} ))\text{Op}_h^w (q_0^N (X/\sqrt{\varepsilon}))u, \text{Op}_h^w (q_0^N (X/\sqrt{\varepsilon}))u)_{L^2(\R^n)}\\
&=
( \text{Op}_{\tilde h}^w (\overline{q}_0^N (X))  \text{Op}_{\tilde h}^w (\chi (X))\text{Op}_{\tilde h}^w (q_0^N (X))Uu, Uu)_{L^2(\R^n)}\le \mathcal{O}_N(1)\|u\|_{L^2(\R^n)}^2,
\end{aligned}
\end{equation}
for all $0<h$ small enough and all $N\in \N$. Here we have used the fact that $\chi$ has a compact support, and therefore, 
\[
\text{Op}_{\tilde h}^w (\overline{q}_0^N (X))  \text{Op}_{\tilde h}^w (\chi (X))\text{Op}_{\tilde h}^w (q_0^N (X))\in \text{Op}_{\tilde h}^w(S(1)),
\]
so that 
\[
\text{Op}_{\tilde h}^w (\overline{q}_0^N (X))  \text{Op}_{\tilde h}^w (\chi (X))\text{Op}_{\tilde h}^w (q_0^N (X))=\mathcal{O}_N(1): L^2(\R^n)\to L^2(\R^n)
\]
is bounded, see \cite[Theorem 4.23]{Zworski_book}

Let us consider the first term  in the left hand side of \eqref{eq_sub_3_1} and show that 
\begin{equation}
\label{eq_sub_3_2_0}
\begin{aligned}
 \Re (( \text{Op}_h^w(p_0)+h\text{Op}_h^w(p_1))\text{Op}_h^w (q_0^N (X/\sqrt{\varepsilon}))u, \text{Op}_h^w (q_0^N (X/\sqrt{\varepsilon}))u)_{L^2(\R^n)}\\
 \le  \mathcal{O}_{N}(h)\|u\|^2_{L^2(\R^n)}.
 \end{aligned}
 \end{equation}
Since $(\text{Op}_h^w(p_0)+h\text{Op}_h^w(p_1))u=0$, we get 
\begin{equation}
\label{eq_sub_3_2_0_two_commut}
\begin{aligned}
(( \text{Op}_h^w(p_0)&+h\text{Op}_h^w(p_1))\text{Op}_h^w (q_0^N (X/\sqrt{\varepsilon}))u, \text{Op}_h^w (q_0^N (X/\sqrt{\varepsilon}))u)_{L^2(\R^n)}\\
  = &( \text{Op}_h^w (\overline{q}_0^N (X/\sqrt{\varepsilon})) [ \text{Op}_h^w(p_0), \text{Op}_h^w (q_0^N (X/\sqrt{\varepsilon}))]u, u)_{L^2(\R^n)}\\
  &+ h( \text{Op}_h^w (\overline{q}_0^N (X/\sqrt{\varepsilon})) [ \text{Op}_h^w(p_1), \text{Op}_h^w (q_0^N (X/\sqrt{\varepsilon}))]u, u)_{L^2(\R^n)}.
\end{aligned}
\end{equation}

Thus, it suffices to show that  
\begin{equation}
\label{eq_sub_3_2_0_first_commut}
 ( \text{Op}_h^w (\overline{q}_0^N (X/\sqrt{\varepsilon})) [ \text{Op}_h^w(p_0), \text{Op}_h^w (q_0^N (X/\sqrt{\varepsilon}))]u, u)_{L^2(\R^n)} \le  \mathcal{O}_{N}(h)\|u\|^2_{L^2(\R^n)},
\end{equation}
and
\begin{equation}
\label{eq_sub_3_2_0_second_commut}
( \text{Op}_h^w (\overline{q}_0^N (X/\sqrt{\varepsilon})) [ \text{Op}_h^w(p_1), \text{Op}_h^w (q_0^N (X/\sqrt{\varepsilon}))]u, u)_{L^2(\R^n)} \le  \mathcal{O}_{N}(1)\|u\|^2_{L^2(\R^n)}.
\end{equation}

Let us start by establishing \eqref{eq_sub_3_2_0_first_commut}. To that end, 
since $q_0$ is quadratic, by the composition formula for the Weyl quantization \eqref{eq_taylor_comp_symb} we have 
\begin{equation}
\label{eq_sub_3_2}
[\text{Op}_h^w(q_0), \text{Op}_h^w(q_0^N) ]=\frac{h}{i}\text{Op}_h^w(\{q_0,q_0^N\})=0.
\end{equation}
Letting 
\[
r(X)=p_0(X)-q_0(X),
\]
and using \eqref{eq_sub_3_2}, we get
\begin{align*}
\text{Op}_h^w (\overline{q}_0^N (X/\sqrt{\varepsilon})) [ \text{Op}_h^w(p_0), &\text{Op}_h^w (q_0^N (X/\sqrt{\varepsilon}))]\\
&=\frac{1}{\varepsilon^{2N}} \text{Op}_h^w (\overline{q}_0^N (X)) [\text{Op}_h^w(r), \text{Op}_h^w (q_0^N (X))].
\end{align*}
We have
\begin{equation}
\label{eq_sub_3_3}
B:=\text{Op}_h^w (\overline{q}_0^N (X)) [\text{Op}_h^w(r), \text{Op}_h^w (q_0^N (X))]\in h\text{Op}_h^w(S_0(\langle X\rangle^{4N+2})),
\end{equation}
as $r\in  S_0(\langle X\rangle^2)$ in view of  \eqref{eq_assumption_5}, and $q_0^N\in S_{0}(\langle X\rangle^{2N})$.

By Proposition \ref{prop_localization_eigenfunction}, there exists $\psi\in C^\infty_0(\R^{2n},[0,1])$ such that for any $0<\delta<1/2$,  
we have for all $h>0$ small enough, 
\[
u=\text{Op}_h^w (\psi (X/h^\delta))u +Ru,
\]  
where  $R\in h^{M_1(1-2\delta)}\text{Op}_h^w(S_{\delta}(\langle X\rangle^{-M_2}))$ for any $M_1, M_2\in \N$.  Thus, 
\[
\varepsilon^{-2N} BR\in h^{-2N+ M_1(1-2\delta)} h\text{Op}_h^w(S_{\delta}(\langle X\rangle^{4N+2-M_2}))\subset h  \text{Op}_h^w(S_{\delta}(1)), 
\]
provided we choose $M_1$ and $M_2$ so large that 
\[
M_1\ge \frac{2N}{1-2\delta}, \quad M_2\ge 4N+2.
\]
Hence, the operator 
\[
\varepsilon^{-2N} BR=\mathcal{O}(h): L^2(\R^n)\to L^2(\R^n)
\]
is bounded for $0<h$ small enough.

Given $N\in \N$, let us choose $\delta$ so that 
\begin{equation}
\label{eq_our_choice_delta}
1/2>\delta\ge \frac{2N}{4N+1},
\end{equation}
and  show that the operator 
\begin{equation}
\label{eq_sub_3_4_0}
\varepsilon^{-2N} B\text{Op}_h^w (\psi (X/h^\delta))=\mathcal{O}(h): L^2(\R^n)\to L^2(\R^n)
\end{equation}
 is bounded for $0<h$ small enough.  To that end, first letting $B=\text{Op}_h^w(b)$,  using the composition formula \eqref{eq_taylor_comp_symb_conclusion} and the fact that $\psi (X/h^\delta)\in S_\delta(\langle X\rangle^{-L})$ for any $L\in \N$, and \eqref{eq_sub_3_3},  we write
\begin{equation}
\label{eq_sub_3_4}
\begin{aligned}
\varepsilon^{-2N}& b(x,\xi)\# \psi (x/h^\delta,\xi/h^\delta)\\
&=\varepsilon^{-2N} \sum_{j=0}^{K-1} \frac{1}{j!} \frac{(ih)^j}{2^j}\sigma(D_x,D_\xi;D_y,D_\eta)^j (b(x,\xi) \psi (y/h^\delta,\eta/h^\delta))|_{y=x,\eta=\xi}+\tilde r, 
\end{aligned}
\end{equation}
where 
\[
\tilde r\in \varepsilon^{-2N}h^{K(1-\delta)} S_\delta(\langle X\rangle^{4N+2-L}),
\]
for any $K\in \N$ and any $L\in \N$.  Choosing  
\[
L\ge 4N+2 \quad\text{and}\quad K\ge 4N+2,
\] 
we conclude that the operator 
\[
\text{Op}_h^w(\tilde r) =\mathcal{O}(h): L^2(\R^n)\to L^2(\R^n)
\]
 is bounded for all $0<h$ small enough.

To prove \eqref{eq_sub_3_4_0},  let us determine the symbol class for the first term in the right hand side of \eqref{eq_sub_3_4}, i.e. 
\begin{equation}
\label{eq_sub_3_4_b_tilde}
\tilde b(x,\xi)=\varepsilon^{-2N} \sum_{j=0}^{K-1} \frac{1}{j!} \frac{(ih)^j}{2^j}\sigma(D_x,D_\xi;D_y,D_\eta)^j (b(x,\xi) \psi (y/h^\delta,\eta/h^\delta))|_{y=x,\eta=\xi}.
\end{equation}
Using the composition formula \eqref{eq_taylor_comp_symb_conclusion},  \eqref{eq_1_15}, and the fact that $q_0$ is quadratic,  we get 
\begin{equation}
\label{eq_sub_3_5}
\begin{aligned}
b(x,\xi)=\sum_{l=0}^{2N} \frac{(ih)^l}{2^l} \sum_{k=1}^{2N} \frac{(ih)^k}{2^k}\sum_{|\alpha|+|\beta|=k} \frac{(-1)^{|\alpha|}}{\alpha!\beta!}\sum_{|\gamma|+|\delta|=l} \frac{(-1)^{|\gamma|}}{\gamma! \delta!} (\p_\xi^\gamma\p_x^\delta \overline{q}_0^N(x,\xi))\\
\p_x^\gamma\p_\xi^\delta \big[ (\p_\xi^\alpha\p_x^\beta r(x,\xi))(\p_x^\alpha\p_\xi^\beta q_0^N(x,\xi)) -(\p_\xi^\alpha\p_x^\beta q_0^N(x,\xi))(\p_x^\alpha\p_\xi^\beta r(x,\xi))\big].
\end{aligned}
\end{equation}

Hence, to estimate $\tilde b$, we see using \eqref{eq_sub_3_4_b_tilde},  \eqref{eq_sub_3_5},  and \eqref{eq_1_15} that we have to estimate the following terms,
\begin{equation}
\label{eq_sub_3_6}
\begin{aligned}
\varepsilon^{-2N} h^{j+l+k-\delta j}
&\p_\xi^{\mu}\p_x^\nu \bigg[ (\p_\xi^\gamma\p_x^\delta \overline{q}_0^N) \p_x^\gamma\p_\xi^\delta \big[ (\p_\xi^\alpha\p_x^\beta r)(\p_x^\alpha\p_\xi^\beta q_0^N) -(\p_\xi^\alpha\p_x^\beta q_0^N)(\p_x^\alpha\p_\xi^\beta r)\big]   \bigg] \\
&(\p_x^{\mu}\p_\xi^\nu \psi)(X/h^\delta), 
\end{aligned}
\end{equation}
where 
\begin{align*}
j=0,\dots, K-1, l=0,\dots, 2N, k=1,\dots, 2N,\\
|\alpha|+|\beta|=k, |\gamma|+|\delta|=l,|\mu|+|\nu|=j.
\end{align*}
It follows from \eqref{eq_sub_3_6} that it is enough to estimate
\begin{equation}
\label{eq_sub_3_7}
\varepsilon^{-2N} h^{j+l+k-\delta j} \p_X^{\mu} \bigg[ (\p_X^\gamma \overline{q}_0^N) \p_X^\gamma
 \big[ (\p_X^\alpha  r)(\p_X^\alpha q_0^N)\big]  \bigg], 
 \end{equation}
 on $\supp(\psi(X/h^\delta))$, i.e. when $|X|\le 3h^\delta$,
with 
\[
|\alpha|=k, |\gamma|=l, |\mu|=j.
\]
Using Leibniz's rule twice, we rewrite \eqref{eq_sub_3_7} as follows,
\begin{equation}
\label{eq_sub_3_8}
\varepsilon^{-2N} h^{j+l+k-\delta j} \sum_{\mu^1+\mu^2=\mu} C_{\mu^{1},\mu^2} (\p_X^{\mu^1+\gamma} \overline{q}_0^N)\bigg(\sum_{\gamma^1+\gamma^2=\mu^2+\gamma} C_{\gamma^1,\gamma^2} (\p_X^{\gamma^1+ \alpha}  r)(\p_X^{\gamma^2+ \alpha} q_0^N)\bigg).
\end{equation}

As $|\alpha|=k\ge 1$, we know that  $|\gamma^1|+|\alpha|\ge 1$. 
Consider first the case $|\gamma^1|+|\alpha|= 1$. In this case 
\[
|\p_X^{\gamma^1+ \alpha}  r|\le \mathcal{O}(|X|^2),
\] 
since 
\[
r(X)=\mathcal{O}(|X|^3)\quad \text{near}\quad 0.
\]
Therefore, using the fact that 
\[
|\p_X^\beta q_0^N|\le \begin{cases} \mathcal{O}(|X|^{2N-|\beta|}), & |\beta|\le 2N,\\
0, &  |\beta|> 2N,
\end{cases}
\]
we estimate the absolute value of \eqref{eq_sub_3_8} in the case $|\gamma^1|+|\alpha|= 1$  by
\begin{equation}
\label{eq_sub_3_9}
\begin{aligned}
&\le \varepsilon^{-2N} h^{j+l+k-\delta j}   \mathcal{O} (|X|^{4N-j-2l-k+|\gamma^1|})\mathcal{O}(|X|^2)\\
&\le \varepsilon^{-2N}  \mathcal{O}(h h^{\delta(4N+1)}h^{(1-2\delta)(j+l+k-1)})
\le \varepsilon^{-2N}  \mathcal{O}(h h^{\delta(4N+1)})\le \mathcal{O}(h).
\end{aligned}
\end{equation}
Here we have used that $4N -j-2l-k+|\gamma^1|\ge 0$ and  $1/2> \delta\ge \frac{2N}{4N+1}$.  Similarly,  using that  
\[
|\p_X^{\gamma^1+ \alpha}  r|\le \mathcal{O}(|X|)\quad \text{when}\quad |\gamma^1|+|\alpha|= 2, 
\]
and 
\[
|\p_X^{\gamma^1+ \alpha}  r|\le \mathcal{O}(1) \quad \text{when}\quad |\gamma^1|+|\alpha|\ge 3, 
\]
we obtain the estimate \eqref{eq_sub_3_9} also in the case when $|\gamma^1|+|\alpha|\ge 2$. 
Hence, we get
\[
|\tilde b(x,\xi)|\le  \mathcal{O}(h). 
\]

To estimate the derivatives $\p_X^{\rho}\tilde b(X)$, $|\rho|\ge 1$, arguing as above and using  Leibniz's rule one more time, we conclude that we have to estimate 
 \begin{equation}
 \label{eq_deriv_tilde_b}
 \varepsilon^{-2N} h^{j+l+k-\delta j-|\rho_2|\delta } \p_X^{\rho_1+\mu} \bigg[ (\p_X^\gamma \overline{q}_0^N) \p_X^\gamma
 \big[ (\p_X^\alpha  r)(\p_X^\alpha q_0^N)\big]  \bigg], 
 \end{equation}
 on $\supp(\psi(X/h^\delta))$,  with 
\[
|\rho|=|\rho_1|+|\rho_2|, 
|\alpha|=k, |\gamma|=l, |\mu|=j.
\]
Similarly to \eqref{eq_sub_3_8}, we write \eqref{eq_deriv_tilde_b} as follows,
\begin{align*}
\varepsilon^{-2N} h^{j+l+k-\delta j-|\rho_2|\delta } &\sum_{\mu^1+\mu^2=\rho_1+\mu} C_{\mu^{1},\mu^2} (\p_X^{\mu^1+\gamma} \overline{q}_0^N)\\
&\bigg(\sum_{\gamma^1+\gamma^2=\mu^2+\gamma} C_{\gamma^1,\gamma^2} (\p_X^{\gamma^1+ \alpha}  r)(\p_X^{\gamma^2+ \alpha} q_0^N)\bigg).
\end{align*}

Therefore, using that $4N-|\rho_1|- j-2l-k+|\gamma^1|\ge 0$, we get 
\begin{align*}
|\p_X^{\rho} \tilde b(x,\xi)|&\le \varepsilon^{-2N} h^{j+l+k-\delta j-|\rho_2|\delta}|\p_X^{\gamma^1+ \alpha}  r| \mathcal{O} (|X|^{4N-|\rho_1|- j-2l-k+|\gamma^1|})\\
&\le h^{-\delta|\rho|}  \mathcal{O}(h^{-2N}h h^{\delta(4N+1)}h^{(1-2\delta)(j+l+k-1)}) \le h^{-\delta|\rho|} \mathcal{O}(h),
\end{align*}
since $1/2>\delta\ge \frac{2N}{4N+1}$. Hence, 
\[
\tilde b\in hS_{\delta}(1),
\]
and thus,  \eqref{eq_sub_3_4_0} and \eqref{eq_sub_3_2_0_first_commut} follow. 

Let us now show \eqref{eq_sub_3_2_0_second_commut}. To that end, we write
 \[
  \text{Op}_h^w (\overline{q}_0^N (X/\sqrt{\varepsilon})) [ \text{Op}_h^w(p_1), \text{Op}_h^w (q_0^N (X/\sqrt{\varepsilon}))]=\frac{1}{\varepsilon^{2N}}W, 
 \]
where 
\[
W:= \text{Op}_h^w (\overline{q}_0^N (X)) [ \text{Op}_h^w(p_1), \text{Op}_h^w (q_0^N (X))]\in h\text{Op}_h^w(S_0(\langle X\rangle^{4N+2})),
\]
as $p_1\in S_0(\langle X\rangle^{2})$. Arguing as above, we see that it suffices to verify that 
\begin{equation}
\label{eq_sub_3_2_0_second_commut_2}
\text{Op}_h^w(\tilde w):=\varepsilon^{-2N}W\text{Op}_h^w(\psi(X/h^\delta))=\mathcal{O}(1):L^2(\R^n)\to L^2(\R^n),
\end{equation}
for $0<h$ small enough. Similarly to \eqref{eq_sub_3_8}, we observe that to bound $\tilde w$,  we have to estimate the expression
\[
\varepsilon^{-2N} h^{j+l+k-\delta j} \sum_{\mu^1+\mu^2=\mu} C_{\mu^{1},\mu^2} (\p_X^{\mu^1+\gamma} \overline{q}_0^N)\bigg(\sum_{\gamma^1+\gamma^2=\mu^2+\gamma} C_{\gamma^1,\gamma^2} (\p_X^{\gamma^1+ \alpha}  p_1)(\p_X^{\gamma^2+ \alpha} q_0^N)\bigg)
\]
 where  
\[
|\alpha|=k\ge 1, |\gamma|=l\ge 0, |\mu|=j\ge 0,
\]
in the region  $|X|\le 3h^\delta$. Using that $|\p^\beta p_1(X;h)|\le \mathcal{O}(1)$ for all $\beta$ and $|\gamma^2|\le |\mu^2|+|\gamma|$, we see that the expression above can be bounded by
\begin{align*}
\le C\varepsilon^{-2N} h^{j+l+k-\delta j} h^{\delta(4N-|\mu^1|-|\gamma|-|\gamma^2|-|\alpha|)}&\le 
C\varepsilon^{-2N}h^{4N\delta} h^{(j+l)(1-2\delta)}h^{k(1-\delta)}\\
&\le C\varepsilon^{-2N}h^{4N\delta+1-\delta}\le C,
\end{align*}
provided that $\delta\ge \frac{2N-1}{4N-1}$.  The latter condition is implied by \eqref{eq_our_choice_delta}, and therefore, we conclude that $|\tilde w|\le \mathcal{O}(1)$.  The derivatives of $\tilde w$ are estimated as above, and we get $\tilde w\in S_{\delta}(1)$. This shows \eqref{eq_sub_3_2_0_second_commut_2} and hence, \eqref{eq_sub_3_2_0_second_commut}.
The proof is complete. 
\end{proof}

\subsection{Concluding the proof of Theorem \ref{thm_main}} 
Let $N\in\N$ be fixed. Then by Proposition \ref{prop_test_Gaarding} and scaling \eqref{eq_1_2}, we have
\begin{equation}
\label{eq_3_10}
\| \text{Op}_{\tilde h}^w (q_0^N (X))Uu\|_{L^2(\R^n)}\le \mathcal{O}(1),
\end{equation}
for all $0<h$ small enough.  Now it is convenient to make an additional scaling to pass to the case $\tilde h=1$. By \eqref{eq_1_2} and the homogeneity of $q_0^N$, we have
\[
\text{Op}_{\tilde h}^w(q_0^N)=\tilde h^N V^{-1} \text{Op}_1^w (q_0^N)V, 
\]
where
\[
(Vu)(\tilde x)=(\tilde h)^{\frac{n}{4}}u(\sqrt{\tilde h}\tilde x).
\] 
Hence, in the remainder of the proof we may assume that $\tilde h=1$.

We have $q_0^N(X)\in S^{2N}_{X}(\R^{2n})$. Here 
\[
S^{m}_{X}(\R^{2n})=\{a(X)\in C^\infty(\R^{2n};\C): \forall a\in \N^{2n},\exists C_\alpha>0, |\p^\alpha a(X)| \le C_\alpha \langle X\rangle^{m-|\alpha|}\},
\]
see \cite[Section 23.1]{Shubin_book}.
Using the fact that $\Re q_0(X)$ is a positive definite quadratic form, we get
\[
|q_0^N(X)|\ge (\Re q_0(X))^N\ge |X|^{2N}/C, \quad X\ne 0.
\]
It follows from \cite[Theorem 25.1]{Shubin_book} that there is $b \in S^{-2N}_{X}(\R^{2n})$ such that 
\begin{equation}
\label{eq_3_11}
\text{Op}_1^w(b)\text{Op}_1^w(q_0^N)-I=R,
\end{equation}
where the operator $R$ has a kernel in the Schwartz space $\mathcal{S}(\R^{2n})$, and therefore,
\begin{equation}
\label{eq_3_12}
R: \mathcal{S}'(\R^{n})\to \mathcal{S}(\R^{n}).
\end{equation}
Here $ \mathcal{S}'(\R^{n})$ is the space of tempered distributions.

Let $s\in \R$ and let
\[
\mathcal{H}^s(\R^n)=\{u\in \mathcal{S}'(\R^n): \text{Op}_1^w((1+|x|^2+|\xi|^2)^{s/2})u\in L^2(\R^n)\}.
\]
We know that 
\begin{equation}
\label{eq_3_13}
\text{Op}_1^w(b): L^2(\R^n)\to  \mathcal{H}^{2N}(\R^n)
\end{equation}
is bounded, see  \cite[Theorem 25.2]{Shubin_book}.  It follows from \eqref{eq_3_11},  \eqref{eq_3_10}, \eqref{eq_3_12} and \eqref{eq_3_13} that 
\begin{equation}
\label{eq_3_14}
\begin{aligned}
\|Uu\|_{\mathcal{H}^{2N}(\R^n)}\le \|\text{Op}_1^w(b)\text{Op}_1^w(q_0^N) Uu\|_{\mathcal{H}^{2N}(\R^n)} +\|RUu\|_{\mathcal{H}^{2N}(\R^n)}\le \mathcal{O}(1),
\end{aligned}
\end{equation}
for all $0<h$ small enough.

Choosing $N>n/4$ and using the fact that $\mathcal{H}^{2N}(\R^n)\subset H^{2N}(\R^n)$, the standard Sobolev space, together with the Sobolev embedding $H^{2N}(\R^n)\subset L^\infty(\R^n)$, we get  
\[
\|Uu\|_{L^\infty(\R^n)}\le \mathcal{O}(1).
\]
Hence, recalling \eqref{eq_1_2_0_U}, we obtain that  
\[
\|u\|_{L^\infty(\R^n)}\le \mathcal{O}(1) h^{-n/4}. 
\]
This completes the proof of Theorem \ref{thm_main}.

\section*{Acknowledgements} 
We are very grateful to Maciej Zworski for helpful discussions. 
 The research of K.K. is partially supported by the National Science Foundation (DMS 1500703). The research of G.U. is partially supported by the National Science Foundation.

\end{document}